\documentclass{amsart}
\usepackage{HMStyle}
\usepackage{changepage}

\begin{document}

\title[Relative Du Bois complexes]{Relative Du Bois complexes over higher dimensional bases}

\author[D.Kassabova]{Dora Kassabova}\address{University of Washington, Department of Mathematics, Seattle, WA 98195, USA}
\email{dmk285@uw.edu}

\author[H.Ning]{Haoming Ning}
\address{School of Mathematics, Georgia Institute of Technology, GA 30332, USA}
\email{hning31@gatech.edu}

\date{\today}
\subjclass[2020]{14B05, 14J17, 14F10}

\begin{abstract}
We extend the notion of the right relative Du Bois complex to a smooth base of arbitrary dimension. We further show that this complex satisfies base change in the sense of Kov\'acs--Taji, for flat families with strict higher Du Bois fibers, extending the result by Ning over a smooth curve.
\end{abstract}

\maketitle


\medskip
\section{Introduction}

For a smooth morphism $f:X \to S$ of smooth complex varieties, the \emph{relative de Rham complex} $\Omega^\kdot_{X/S}$ is characterized by the exact sequence:
$$
0 \xrightarrow[]{} f^*\Omega^1_S \xrightarrow[]{} \Omega^1_X \xrightarrow[]{} \Omega^1_{X/S} \xrightarrow[]{} 0.
$$
Exterior powers of this sequence give rise to a natural filtration of $\Omega^p_X$ with graded pieces $f^*\Omega_S^q\otimes\Omega^{p-q}_{X/S}$. A key property is that $\Omega^p_{X/S}$ satisfies base change, that is, for $f$ smooth,
$$\Omega^p_{X/S} \otimes^{\tL}_X\scrO_{X_s} \qis \Omega^p_{X_s}.$$

For $X$ singular, Du Bois \cite{DB81} replaced the de Rham complex $\Omega_X^\kdot$ with the filtered complex $\uOmega_X^\kdot$ in the derived category of $X$. It agrees with $\Omega_X^\kdot$ when $X$ is smooth, and, from a cohomological point of view, this complex behaves very much like the usual de Rham complex. It is therefore natural to ask whether there is a way to produce an object analogous to the relative de Rham complex of a smooth family, equipped with a similar base change property.

Kov\'acs constructed a (left) relative Du Bois complex, which we will denote $\uOmega^{p,-}_{X/S}$, over a smooth curve in \cite{Kov96} and over a smooth base of arbitrary dimension in \cite{Kov97a,Kov97}. It shows many applications to families of singular varieties; see \cite{Kov96, Kov97, KT23, KT24}. One important question of Kov\'acs--Taji \cite{KT25} asks whether the relative Du Bois complex satisfies base change, that is, does
\[
\uOmega^{p,-}_{X/S} \otimes^{\tL}_X\scrO_{X_s} \qis \uOmega^p_{X_s}
\]
hold? It turns out that, in general, base change for the left relative Du Bois complex fails in every degree \cite{JK25, Nin26}. Ning proposed an alternative construction, dubbed the right relative Du Bois complex $\uOmega^{p,+}_{X/C}$ for $f: X \to C$, where $C$ is a smooth curve. It satisfies base change \cite[Theorem 1.1]{Nin26} for flat families with strict higher Du Bois singularities (cf. Definition \ref{def_higher_DB}). As shown in \emph{loc. cit.}, base change properties have geometric applications such as deformation invariance of higher Du Bois singularities.

The purpose of this paper is to extend the notion of the right relative Du Bois complex $\uOmega^{p,+}_{X/S}$ to a smooth base of arbitrary dimension while preserving base change properties, see Definition \ref{def_bc} for the precise definition. We construct the following.

\begin{theorem} (= Theorem \ref{thm_relDB})
Let $f: X\to S$ be a morphism of complex algebraic varieties with $\dim X=n$, $\dim S=k$ and $S$ smooth. Then there exist objects $\uOmega^{p, +}_{X/S}, \uOmega^{p, -}_{X/S}$ in $D^b_{\rm coh}(X)$ for all $p\in \bbZ$ with the following properties:
\begin{enumerate}
\item $\uOmega^{p,+}_{X/S}\qis 0$ for $p< 0$, $\uOmega^{p, -}_{X/S}\qis 0$ for $p>n-k$.

\item There exist hyperfiltrations $\bbF^{q,+}\uOmega^p_X, \bbF^{q,-}\uOmega^p_X$ of $\uOmega^p_X$ with $0\le q\le k+1$, such that
\[
\bbF^{0,+}\uOmega^p_X \qis \bbF^{0,-}\uOmega^p_X \qis \uOmega^p_X, \quad \bbF^{k+1,+}\uOmega^p_X \qis \bbF^{k+1,-}\uOmega^p_X \qis 0,
\]
whose associated graded complexes, with $0\le q\le k$ are,
\[
\bbG^{q,+}\uOmega^p_X\qis \uOmega^{p-q,+}_{X/S}\otimes f^*\Omega^q_S, \quad \quad \bbG^{q,-}\uOmega^p_X\qis \uOmega^{p-q,-}_{X/S}\otimes f^*\Omega^q_S.
\]

\item $\bbF^{q,+}\uOmega^p_X$ and $\bbF^{q,-}\uOmega^p_X$ are functorial.

\item If $f$ is smooth, then $\uOmega^{p,+}_{X/S}\qis \uOmega^{p,-}_{X/S}\qis \Omega^p_{X/S}$.

\item Let $\{\nu_i: U_i\into X\}$ be a finite open cover of $X$. Then
\[
\uOmega^{p,\pm}_{X/S} \qis R(\nu_\kdot)_* \uOmega^{p,\pm}_{U_\kdot /S}.
\]
\end{enumerate}
\end{theorem}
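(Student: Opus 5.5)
I will construct both complexes through a cubical (or simplicial) hyperresolution $\epsilon_\kdot\colon X_\kdot\to X$ of $X$, as in Du Bois' construction of $\uOmega^p_X\qis R\epsilon_{\kdot*}\Omega^p_{X_\kdot}$. Each component $X_i$ is smooth, but the composites $f_i=f\circ\epsilon_i\colon X_i\to S$ need not be smooth. So I cannot simply take relative Kähler differentials of the components. I will instead work with two natural filtrations of $\Omega^p_{X_i}$ induced by $f_i^*\Omega^1_S\to\Omega^1_{X_i}$.

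For the \emph{left} complex I follow Kov\'acs \cite{Kov97a,Kov97}. Let $\mathcal{F}^{q,-}\Omega^p_{X_i}$ be the image of $f_i^*\Omega^q_S\otimes\Omega^{p-q}_{X_i}\to\Omega^p_{X_i}$. These filtrations are strictly functorial along the face maps of $X_\kdot$, because pullback of forms preserves them. I then set
\[
\bbF^{q,-}\uOmega^p_X:=R\epsilon_{\kdot*}\mathcal{F}^{q,-}\Omega^p_{X_\kdot}.
\]
For the \emph{right} complex I generalise \cite{Nin26}, where the key object was the subsheaf of forms divisible by $df$. Over a $k$-dimensional base I would let $\mathcal F^{q,+}\Omega^p_{X_i}$ be the saturation in $\Omega^p_{X_i}$ of the image above, i.e.\ the forms that become divisible by $q$-forms from $S$ after restricting to the open set where $f_i$ is smooth. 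Equivalently, $\mathcal F^{q,+}\Omega^p_{X_i}$ is the kernel of $\Omega^p_{X_i}\to j_*(\Omega^p/\mathcal F^{q,-})|_{U}$, where $j\colon U\into X_i$ is the smooth locus of $f_i$. Again pullbacks preserve these filtrations. I then define
\[
\bbF^{q,\pm}\uOmega^p_X:=R\epsilon_{\kdot*}\mathcal F^{q,\pm}\Omega^p_{X_\kdot}.
\]
This gives $\bbF^{0,\pm}=\uOmega^p_X$, and $\bbF^{k+1,\pm}=0$ since $\Omega^{k+1}_S=0$.

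The main step, and the one I expect to be the obstacle, is to identify the graded pieces as $\Omega^{p-q,\pm}_{X_i/S}\otimes f_i^*\Omega^q_S$ for suitable sheaves $\Omega^{r,\pm}_{X_i/S}$ independent of $q$. I would then define
\[
\uOmega^{r,\pm}_{X/S}:=R\epsilon_{\kdot*}\Omega^{r,\pm}_{X_\kdot/S}.
\]
Since $\Omega^q_S$ is locally free, the projection formula then yields (2). For the $+$ side I would define $\Omega^{r,+}_{X_i/S}:=\mathcal F^{0,+}\Omega^r_{X_i}/\mathcal F^{1,+}\Omega^r_{X_i}$, the torsion-free quotient. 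I then need to show that wedging with $f_i^*\Omega^q_S$ induces an isomorphism
\[
\Omega^{r,+}_{X_i/S}\otimes f_i^*\Omega^q_S\cong \mathcal F^{q,+}\Omega^{r+q}_{X_i}\big/\mathcal F^{q+1,+}\Omega^{r+q}_{X_i}.
\]
Over the smooth locus this is the usual exterior-power filtration. Globally I would prove it by a reflexivity argument: both sides are torsion-free and agree on $U$. If $f_i$ is not dominant, $U$ is empty. That case requires replacing $S$ by the (smooth, after refining the hyperresolution) image, or accepting the convention that everything is then computed with the saturated filtration. This is the delicate point I would have to work out.

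The remaining properties are then formal. For (1), $\Omega^{r,+}=0$ for $r<0$ trivially. On the left side the graded pieces vanish once $r>\dim X_i-k$: these are $\Omega^{r}_{X_i/S}$ modulo the images from $S$, and I would check their vanishing in that range by counting ranks. If some component has dimension less than $n$, I would invoke the independence of the hyperresolution to reduce to components of dimension at most $n$. For (3), functoriality follows from the fact that maps of varieties over $S$ lift to maps of hyperresolutions preserving both filtrations. Independence of the chosen hyperresolution uses the same comparison argument as for $\uOmega^p_X$, applied to filtered complexes. For (4), if $f$ is smooth I take the trivial hyperresolution $X_\kdot=X$. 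Then $f$ is smooth, so $\mathcal F^{q,+}=\mathcal F^{q,-}$ is the standard filtration, and its graded piece is $\Omega^p_{X/S}$. For (5), the hyperresolution restricts to hyperresolutions of the $U_i$ and their intersections, so Mayer–Vietoris / cohomological descent for $R\epsilon_{\kdot*}$ gives the Čech description.
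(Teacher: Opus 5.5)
There is a genuine gap, and it is exactly at the step you flagged. A filtration of $\Omega^p_{X_i}$ by subsheaves cannot in general have graded pieces $\Omega^{p-q,\pm}_{X_i/S}\otimes f_i^*\Omega^q_S$ with $\Omega^{r,\pm}_{X_i/S}$ independent of $q$. The reflexivity argument does not close this: a map of torsion-free sheaves that is an isomorphism on a dense open need not be an isomorphism.

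The trivial hyperresolution of $f=xy\colon \bbA^2\to\bbA^1$ already breaks the $+$ side.
\begin{itemize}
\item Off the origin $df\wedge\Omega^1_X=\Omega^2_X$, so your saturated filtration gives $\mathcal F^{1,+}\Omega^2_X=\Omega^2_X$.
\item On the other hand $\mathcal F^{1,+}\Omega^1_X=\ker(df\wedge)=\scrO_X\,df$.
\item Hence the wedge map $\Omega^{1,+}_{X/S}\otimes f^*\omega_S\to \mathcal F^{1,+}\Omega^2_X/\mathcal F^{2,+}\Omega^2_X=\Omega^2_X$ is $\eta\mapsto df\wedge\eta$, with image $(x,y)\Omega^2_X$. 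It is injective but not surjective.
\end{itemize}
Over a curve the defect is, in general, the Koszul cohomology of $(\Omega^\kdot_{X_i},df\wedge)$ along the critical locus.

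The image filtration you use on the left has the dual defect. The map $\Omega^{p-1}_{X_i}/df\wedge\Omega^{p-2}_{X_i}\to df\wedge\Omega^{p-1}_{X_i}$ is onto, but its kernel is that same Koszul cohomology. It is nonzero for $p=3$ and $f=xy$ on $\bbA^3$: $dx\wedge dy$ is killed by $df\wedge$ but is not in $df\wedge\Omega^1$.

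The left construction also violates (1). For $f=xy$ on $\bbA^2$, $\Omega^2_X/df\wedge\Omega^1_X=\Omega^2_X/(x,y)\Omega^2_X\neq 0$ although $2>n-k$; a rank count only shows these pieces are torsion. Finally, non-dominant components occur routinely in a hyperresolution. For them $U=\emptyset$, so your definition gives $\mathcal F^{k+1,+}=\Omega^p\neq 0$. Replacing $S$ by the image destroys the required $f^*\Omega^q_S$ form of the graded pieces. (Incidentally, Kov\'acs's left complex is not an image filtration either.)

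The missing idea is that these relative complexes are genuinely derived objects, not $R\epsilon_{\kdot*}$ of sheaves on the components. For $X$ smooth over a curve, $\uOmega^{p,+}_{X/S}$ is the truncated Koszul complex $[\scrO_X\otimes f^*\omega_S^{\otimes p}\to\Omega^1_X\otimes f^*\omega_S^{\otimes p-1}\to\dots\to\Omega^p_X]$. For $f=xy$ on $\bbA^3$, $\uOmega^{3,+}_{X/S}$ has $\mathcal H^{-1}\neq0$.

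The paper therefore never filters sheaves. Its steps are:
\begin{itemize}
\item It uses only the wedge map $\uOmega^p_X\otimes f^*\Omega^q_S\to\uOmega^{p+q}_X$, built on a hyperresolution and shown to be independent of it.
\item It defines inductively $\frakF^{p,+}_{i+1}=\Cone(\frakF^{i,+}_i\otimes f^*\Omega^{p-i}_S\to\frakF^{p,+}_i)$, checking at each stage that the induced maps stay compatible with wedging from the base.
\item It sets $\uOmega^{p,+}_{X/S}=\frakF^{p,+}_p$ and $\bbF^{q,+}\uOmega^p_X=\Cone(\uOmega^p_X\to\frakF^{p,+}_{p-q+1})[-1]$.
\end{itemize}
The graded pieces are then identified by the nine-lemma, with no sheaf-level isomorphism needed. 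Functoriality and independence of the hyperresolution follow by induction along these triangles from the corresponding facts for $\uOmega^p_X$. For arbitrary subsheaves $\mathcal F^{q,\pm}$ there would be no descent statement to appeal to. The left complex is taken from Kov\'acs, whose construction is likewise a descending iterated-cone construction.
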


Note that the statement for the left relative Du Bois complex $\uOmega^{p,-}_{X/S}$ follows from \cite[Theorem 3.4]{Kov97}. Our construction for the right relative complex agrees with the one constructed in \cite{Nin26} over a smooth curve, see Proposition \ref{prop_curve_compare}.

We also show base change properties for this generalized right relative Du Bois complex for fibers that are strict higher Du Bois (cf. Definition \ref{def_higher_DB}).

\begin{theorem}( = Theorem \ref{base change})
Let $f: X\to S$ be a flat morphism of algebraic varieties with $\dim X=n$, $\dim S=k$ and $S$ smooth. Let $s\in S$ be a closed point and suppose the fiber $X_s$ has strict-$m$-Du Bois singularities. Then $X_s$ satisfies right-$m$-base change (cf. Definition \ref{def_bc}) for the relative Du Bois complex, that is, for all $0\le p\le m$,
\[
\uOmega^{p, +}_{X/S}\otimes^{\tL}_X\scrO_{X_s} \qis \uOmega^p_{X_s}.
\]
\end{theorem}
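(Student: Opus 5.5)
Since $f$ is flat and $S$ is smooth, I will argue by induction on $k=\dim S$, cutting the base down to a curve through $s$, where the base change result \cite[Theorem 1.1]{Nin26} is available. The question is local near $X_s$, so I may shrink $S$ around $s$. Choose a regular system of parameters $t_1,\dots,t_k$ at $s$, and let $H=\{t_k=0\}\subset S$, which is smooth of dimension $k-1$. Put $Y=f^{-1}(H)$. Flatness of $f$ makes $t_k$ a nonzerodivisor on $X$, so $\scrO_Y\qis \scrO_X\otimes^{\tL}\scrO_{X}/(t_k)$. The family $f|_Y:Y\to H$ is again flat, and its fiber over $s$ is still $X_s$.

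The key intermediate statement is a restriction-to-hypersurface base change: for $0\le p\le m$,
\[
\uOmega^{p,+}_{X/S}\otimes^{\tL}_X\scrO_Y\qis \uOmega^{p,+}_{Y/H},
\]
at least in a neighborhood of $X_s$. Granting this, we have $\scrO_{X_s}\qis \scrO_Y\otimes^{\tL}_Y\scrO_{X_s}$ by flatness of $Y\to H$. Then
\[
\uOmega^{p,+}_{X/S}\otimes^{\tL}_X\scrO_{X_s}\qis \uOmega^{p,+}_{Y/H}\otimes^{\tL}_Y\scrO_{X_s},
\]
and induction reduces everything to $k=1$. That case is \cite[Theorem 1.1]{Nin26}, transported to the present construction via Proposition \ref{prop_curve_compare}.

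To prove the hypersurface statement I would first use openness of strict-$m$-Du Bois singularities in flat families, citing the corresponding deformation result of \cite{Nin26}, to shrink so that every fiber is strict-$m$-Du Bois. I would also use the known fact that a Cartier divisor with strict $m$-Du Bois singularities forces the ambient space to be strict $m$-Du Bois in a neighborhood. Iterating this along $t_1,\dots,t_k$ shows that $X$ and $Y$ are strict $m$-Du Bois near $X_s$, so $\uOmega^p_X$ and $\uOmega^p_Y$ are sheaves for $p\le m$. The filtration $\bbF^{q,+}$ of Theorem \ref{thm_relDB}(2) then becomes a filtration by sheaves. By induction on $p$, starting from $\uOmega^{0,+}_{X/S}\qis\scrO_X$, this should show that each $\uOmega^{p,+}_{X/S}$ is a sheaf. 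More precisely, it should be the cokernel of the natural map from the image of $f^*\Omega^1_S\otimes\uOmega^{p-1}_X$, mirroring the smooth exact sequence.

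Comparing with $Y$, I would use the conormal sequence $0\to\scrO_Y\,dt_k\to\Omega^1_S|_H\to\Omega^1_H\to0$. I would also use the short exact sequence
\[
0\to\uOmega^{p-1}_Y\to\uOmega^p_X|_Y\to\uOmega^p_Y\to 0,
\]
which comes from the strict Du Bois hypersurface theory used in \cite{Nin26}. Together these should identify $\uOmega^{p,+}_{X/S}|_Y$ with $\uOmega^{p,+}_{Y/H}$, since both are quotients of $\uOmega^p$ by the forms containing base directions. It then remains to check that the derived restriction has no higher Tor, that is, that $t_k$ is a nonzerodivisor on $\uOmega^{p,+}_{X/S}$. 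I would get this from the same sequence by a snake-lemma and depth argument on the graded pieces.

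I expect the main obstacle to be this torsion-freeness and exactness step. It requires showing that the relative complex is a sheaf and that the defining maps of $\bbF^{q,+}$ are injective after restriction. If the direct approach fails, I would instead run the whole argument along the curve $C=\{t_1=\dots=t_{k-1}=0\}$. There I would compare $\uOmega^{p,+}_{X/S}\otimes^{\tL}\scrO_{X_C}$ with Ning's $\uOmega^{p,+}_{X_C/C}$ using the Koszul resolution of $\scrO_{X_C}$, which is exact by flatness.
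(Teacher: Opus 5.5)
\textbf{There is a genuine gap in the hypersurface step.} Your outer reduction is fine. Since $\uOmega^{p,+}_{X/S}\otimes^{\tL}_X\scrO_{X_s}\qis(\uOmega^{p,+}_{X/S}\otimes^{\tL}_X\scrO_Y)\otimes^{\tL}_Y\scrO_{X_s}$, the statement $\uOmega^{p,+}_{X/S}\otimes^{\tL}_X\scrO_Y\qis\uOmega^{p,+}_{Y/H}$ together with induction on $\dim S$ would suffice. But that statement carries all of the content, and the sheaf-theoretic route you sketch for it does not go through.

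Knowing that $X$ and $Y$ are strict-$m$-Du Bois near $X_s$ makes $\uOmega^p_X\qis\Omega^p_X$ a sheaf for $p\le m$. It does not make the objects of Construction~\ref{cons_frakF} sheaves. Each $\frakF^{p,+}_{i+1}$ is a cone, and even when both inputs are sheaves it is a sheaf only if $\frakF^{i,+}_i\otimes f^*\Omega^{p-i}_S\to\frakF^{p,+}_i$ is injective. Already for $p=2$, where $\frakF^{1,+}_1=\Omega^1_{X/S}$ and $\frakF^{2,+}_1=\Omega^2_X/f^*\Omega^2_S$, you need $\Omega^1_{X/S}\otimes f^*\Omega^1_S\to\Omega^2_X/f^*\Omega^2_S$ to be injective. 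That requires controlling the torsion of $\Omega^1_{X/S}$ along the non-smooth locus of $f$. Similarly, $t_k$ being a nonzerodivisor on $\Omega^p_X$ says nothing about the quotient $\Omega^p_{X/S}$.

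Iterated along $t_1,\dots,t_k$, your two claims amount to $\uOmega^{p,+}_{X/S}\qis\Omega^p_{X/S}$ with $\Omega^p_{X/S}$ flat over $S$ near $X_s$. The theorem follows from that immediately via $\Omega^p_{X/S}\otimes\scrO_{X_s}=\Omega^p_{X_s}$. Conversely, the theorem plus the local flatness criterion shows that $\uOmega^{p,+}_{X/S}$ is a sheaf flat over $S$ near $X_s$. So the unspecified ``snake-lemma and depth argument'' is not a technical step but a reformulation of the theorem, and the induction on $\dim S$ does not reduce it.

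\textbf{The missing idea is to stay in the derived category and use Proposition~\ref{prop_rel_dim1_triangle}.} The paper takes $g=(t_1,\dots,t_{k-1})\colon S\to T=\bbA^{k-1}$, which is smooth of relative dimension one. This gives the triangle $\uOmega^{p-1,+}_{X/S}\otimes f^*\Omega^1_{S/T}\to\uOmega^{p,+}_{X/T}\to\uOmega^{p,+}_{X/S}\xrightarrow{+1}$. It restricts this to $X_t=(gf)^{-1}(t)$ and maps it to the defining triangle of the curve family $X_t\to S_t$. The middle arrow is a quasi-isomorphism by the induction hypothesis on $\dim S$ applied to $gf\colon X\to T$; here $X_t$ is strict-$m$-Du Bois near $X_s$ by \cite[Theorem 1.3]{Nin26}. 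The left arrow is a quasi-isomorphism by induction on $p$. The five-lemma then gives $\uOmega^{p,+}_{X/S}\otimes^{\tL}_X\scrO_{X_t}\qis\uOmega^{p,+}_{X_t/S_t}$, and curve base change at $s$ finishes.

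So the induction on the base dimension is applied to the projected family $X\to T$, not to a hypersurface section. Your fallback slices along the same curve. However, a Koszul resolution of $\scrO_{X_C}$ only computes the derived restriction. It supplies neither a comparison map to $\uOmega^{p,+}_{X_C/C}$ nor a reason for that map to be a quasi-isomorphism, which is exactly what the relative triangle provides. Your hypersurface statement might plausibly be obtained in the same spirit: use an analogue of Proposition~\ref{prop_rel_dim1_triangle} for $t_k\colon S\to\bbA^1$, combined with curve base change for $t_k\circ f$ and induction on $p$. That would be a derived-category argument, though, not the one you outline.
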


We remark that a recent work of Dirks in \cite{Dir26} constructs an alternative relative Du Bois complex using the theory of mixed Hodge modules, over smooth bases of arbitrary dimensions. It behaves similarly to the right relative Du Bois complex, and Dirks asks \cite[\S 5, Questions 3, 4]{Dir26} whether the existing constructions agree. This result will be discussed in an upcoming work \cite{Nin26a}.

\smallskip
\subsection*{Paper Outline}
Section \ref{prelim} reviews preliminary material essential to the construction. In Section \ref{section rel db}, we construct the right relative Du Bois complex $\uOmega^{p,+}_{X/S}$ directly for a singular total space, refining the methods of \cite{Kov97}. We verify that both relative Du Bois complexes $\uOmega^{p, \pm}_{X/S}$ agree with the existing definitions in the literature \cite{Kov96, Nin26} over a smooth curve. Section \ref{section base change} shows the base change property of $\uOmega^{p,+}_{X/S}$ for flat families with strict higher Du Bois fibers.

\smallskip
\subsection*{Acknowledgments} 

The authors would like to thank S\'andor Kov\'acs and Brad Dirks for helpful discussions.

\smallskip
\subsection*{AI Disclosure} 

No generative AI is used for the mathematical contents of this paper.

\medskip
\section{Preliminaries}\label{prelim}

\subsection*{The Du Bois Complex} The Du Bois complex was originally constructed in \cite{DB81} using simplicial hyperresolutions. In this paper, by hyperresolution we will mean cubical hyperresolutions in the sense of \cite{GNPP88}. We refer the reader there for detail, see also \cite[\S 10.6]{Kol13}.

\begin{theorem}[\cite{DB81,GNPP88}]\label{thm_DB}
Let $X$ be a reduced scheme of finite type over $\bbC$. Then there exists an object $\uOmega^\kdot_X$ in $D^b_{\rm filt, coh}(X)$, with associated graded $\uOmega^p_X = \Gr^p_{\rm filt}\uOmega^\kdot_X[p]$ satisfying the following properties.
\begin{enumerate}
\item If $\pi_\kdot : X_\kdot\to X$ is any hyperresolution of $X$, then 
\[
\uOmega^\kdot_X \qis R\pi_{\kdot *} \Omega^\kdot_{X_\kdot}.
\]

\item The construction is functorial, i.e. if $\phi: Y\to X$ is a morphism of reduced schemes of finite type over $\bbC$, then there exists a natural map $\phi^*$ of filtered complexes
$$
\phi^*: \uOmega^\kdot_X \to R\phi_*\uOmega^\kdot_Y.
$$
Furthermore, if $\phi$ is proper, then $\phi^*$ is a morphism in $D^b_{\rm filt,coh}(X)$.

\item \label{dumb filtration} Let $\Omega^\kdot_X$ be the usual De Rham complex of K\"ahler differentials considered with the `filtration b\^ete'. Then there exists a natural map of filtered complexes
$$
\Omega^{\kdot}_X \to \uOmega^\kdot_X
$$
and if $X$ is smooth, this map is a quasi-isomorphism.

\item Let $\{\nu_i : U_i \hookrightarrow X\}$ be a finite open cover of $X$, with $\nu_\kdot: U_\kdot \to X$ the associated map. Then
$$
\uOmega^\kdot_X \qis R\nu_{\kdot *}\uOmega^\kdot_{U_\kdot}.
$$
\end{enumerate}
\end{theorem}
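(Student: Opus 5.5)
This is the foundational existence theorem of Du Bois theory, cited from \cite{DB81,GNPP88}. I would prove it in the framework of cubical hyperresolutions, following \cite{GNPP88}.

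\textbf{Existence of hyperresolutions.} First I would construct, for any reduced $X$ of finite type over $\bbC$, a cubical hyperresolution $\pi_\kdot : X_\kdot \to X$. This is a diagram indexed by a finite cube, with all $X_\alpha$ smooth and $\pi_\kdot$ of cohomological descent. The construction is by induction on $\dim X$. Take a resolution $\tilde X \to X$ that is an isomorphism outside a closed $Z\subset X$, with preimage $E\subset\tilde X$. Form the square $E\to \tilde X$, $E\to Z$, $Z\to X$. Then hyperresolve $Z$ and $E$, which have lower dimension, compatibly with the map $E\to Z$, and iterate. The output is smooth in each cube vertex.

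\textbf{Definition and (1).} I would define $\uOmega^\kdot_X := R\pi_{\kdot *}\Omega^\kdot_{X_\kdot}$. Concretely, this is the total complex of $R\pi_{\alpha *}\Omega^\kdot_{X_\alpha}$ with alternating signs over the cube, filtered by the bête filtration on each $\Omega^\kdot_{X_\alpha}$. Coherence of the graded pieces follows from properness of each $\pi_\alpha$ and Grauert's theorem.

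The main obstacle is independence of the choice of hyperresolution, which is what (1) says. Any two hyperresolutions are dominated by a third, so it suffices to show that a morphism of hyperresolutions induces a filtered quasi-isomorphism. Forgetting the filtration, this is cohomological descent: both sides compute $\bbC_X$ after analytification. The real content is the filtered statement. For this I would reduce, by induction along the cube, to the basic case of a proper map between smooth varieties that is an isomorphism off a divisor. There one uses Hodge theory: degeneration of Hodge-to-de Rham together with strictness of morphisms of mixed Hodge structures. Strictness upgrades the quasi-isomorphism $\bbC \qis \Omega^\kdot$ to a filtered quasi-isomorphism, by checking locally on compactifications. This is the step I expect to carry all the difficulty.

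\textbf{(2), (3) and (4).} For (2), given $\phi:Y\to X$, I would build hyperresolutions $Y_\kdot\to Y$ and $X_\kdot\to X$ with a compatible map $Y_\kdot\to X_\kdot$, by hyperresolving the fibre products inductively. Pulling back forms then gives $\phi^*$, and when $\phi$ is proper the pushforwards preserve coherence. For (3), the maps $\Omega^p_X\to R\pi_{\alpha *}\Omega^p_{X_\alpha}$ are compatible with the bête filtrations. If $X$ is smooth we may take the trivial hyperresolution, which gives the quasi-isomorphism. Finally, (4) follows from Zariski descent. Restricting a hyperresolution to each $U_i$ and to their intersections gives hyperresolutions there, by independence of choice. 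The Čech resolution of the filtered complex on $X$ then yields $\uOmega^\kdot_X \qis R\nu_{\kdot *}\uOmega^\kdot_{U_\kdot}$.
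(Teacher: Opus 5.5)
The paper gives no proof of this statement; it is quoted from \cite{DB81,GNPP88}. Your outline follows the framework of those sources: cubical hyperresolutions, $\uOmega^\kdot_X:=R\pi_{\kdot*}\Omega^\kdot_{X_\kdot}$ with the b\^ete filtration, functoriality via compatible hyperresolutions, and Zariski descent. Parts (2)--(4) go through once the independence in (1) is known.

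\textbf{The gap is in that independence step, which you yourself single out.} A morphism of hyperresolutions (or, in the smooth case, $\Omega^\kdot_X\to R\pi_{\kdot*}\Omega^\kdot_{X_\kdot}$) is a filtered quasi-isomorphism exactly when each graded map, e.g.\ $\Omega^p_X\to R\pi_{\kdot*}\Omega^p_{X_\kdot}$, is a quasi-isomorphism of coherent complexes. That is a statement local on $X$. The unfiltered descent statement says nothing about the individual $\Gr^p$ without some strictness input.

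The strictness you invoke, for morphisms of mixed Hodge structures, together with Hodge-to-de Rham degeneration, concerns only the hypercohomology of a compact variety. After compactifying to $\bar\pi_\kdot:\bar X_\kdot\to\bar X$, it yields $H^q(\bar X,\Omega^p_{\bar X})\cong\bbH^q(\bar X_\kdot,\Omega^p_{\bar X_\kdot})$. In other words, $R\Gamma(\bar X,C_p)=0$, where $C_p$ is the cone of $\Omega^p_{\bar X}\to R\bar\pi_{\kdot*}\Omega^p_{\bar X_\kdot}$. That does not imply $C_p\qis 0$: already $\scrO_{\mathbb{P}^1}(-1)$ is a nonzero coherent sheaf with no cohomology. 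Global Hodge theory gives no handle on the stalks of $C_p$, so ``checking locally on compactifications'' has no content as stated.

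So you need an additional idea that produces the sheaf-level statement. Possible routes:
\begin{enumerate}
\item[(a)] Use resolution of singularities (domination or factorization of proper birational maps by blow-ups along smooth centres) to reduce to a blow-up $\sigma:\tilde X\to X$ of a smooth variety along a smooth centre $Z$. There $E=\mathbb{P}(N_{Z/X})$, and one checks by direct computation that $\Omega^p_X\to R\sigma_*\Omega^p_{\tilde X}\oplus\Omega^p_Z\to R\sigma_*\Omega^p_E\xrightarrow{+1}$ is a triangle. This is in the spirit of the Guill\'en--Navarro Aznar extension criterion.
\item[(b)] Stay with Hodge theory, but add a device that yields $R\Gamma(\bar X,C_p\otimes L^m)=0$ for an ample $L$ and all $m\gg0$. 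That would force $C_p\qis 0$.
\item[(c)] Use a genuinely local strictness result, such as Saito's theorem that $\Gr^F\mathrm{DR}$ commutes with proper direct images of mixed Hodge modules.
\end{enumerate}

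Note also that your ``basic case'' is not a statement about smooth varieties only, since $Z$ and $E$ are singular in general. What has to be proved there is the triangle $\Omega^p_X\to R\pi_*\Omega^p_{\tilde X}\oplus\uOmega^p_Z\to R\pi_*\uOmega^p_E\xrightarrow{+1}$, with $\uOmega_Z,\uOmega_E$ supplied by induction. This carries essentially the whole difficulty and is not settled by the strictness remark.
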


The definitions of higher Du Bois singularities have several distinct flavors outside of the local complete intersection setting. As observed in \cite{Nin26}, the notion of strict-$m$-Du Bois singularities appears to be the most suitable for base change type questions, so we only include this variant here. We refer the reader to \cite{SVV23,Kov25,NN25} for detailed discussions on other notions.

\begin{definition}\label{def_higher_DB}
Let $X$ be a reduced scheme of finite type over $\bbC$ and let $m\in \bbN$. We say $X$ has strict-$m$-Du Bois singularities if the natural map
\[
\Omega^p_X \xrightarrow{\qis} \uOmega^p_X
\]
is a quasi-isomorphism for each $0\le p\le m$.
\end{definition}

\subsection*{Hyperfiltrations}

Let $\mathfrak{A}$ be an abelian category and $D(\mathfrak{A})$ its derived category. Let $\Gamma : \mathfrak{A} \to \mathfrak{A} \mathfrak{b}$ be a left exact additive functor from $\mathfrak{A}$ to the category of abelian groups.

\begin{definition}\label{hyperfiltration} \cite[1.2.1]{Kov97}
Let $K \in \operatorname{Ob}(D^b(\mathfrak{A}))$ be a bounded complex. A hyperfiltration $\bbF$ of $K$ consists of a set of objects $\bbF^q K \in \operatorname{Ob}(D^b(\mathfrak{A}))$ for $q = l, \cdots , k + 1$, where $l, k \in \bbZ$ and morphisms
$$
\phi_q \in \Hom_{D^b(\mathfrak{A})}(\bbF^{q+1} K, \bbF^q K) \quad \text{ for } q = l, \cdots, k,
$$
with $\bbF^l\qis K$ and $\bbF^{k+1}\qis 0$. The $p$-th associated graded complex of a hyperfiltration $\bbF$ is
$$
\bbG^p := \Gr^p_{\bbF}K = \operatorname{Cone(\phi_p)}.
$$
This is the class of the mapping cone of a morphism in $C(\mathfrak{A})$ whose class in $D(\mathfrak{A})$ is $\phi_p$, so that $\bbF^{p+1} \xrightarrow[]{}\bbF^p \xrightarrow{} \bbG^p \xrightarrow[]{+1}$ forms a distinguished triangle. 
\end{definition}

\section{The right relative Du Bois complex}\label{section rel db}

We extend the construction of the right relative Du Bois complex $\uOmega^{p,+}_{X/B}$ from \cite{Nin26}, where the base is a smooth curve, to a smooth base $S$ of arbitrary dimension. This is the ascending analogue of the construction of \cite{Kov97}, in which the pieces $\uOmega^{p}_{X/S}$ are built upward from $p=0$ rather than downward from $p=\dim X - \dim S$.

Our approach further simplifies the constructions in \emph{loc. cit.} by building $\uOmega^p_{X/S}$ directly without assuming $X$ is smooth. The central observation is that the only data required from a hyperresolution is a wedge map on the Du Bois complex of $X$. The following lemma provides the naturality that we need.

\begin{lemma}\label{lem_wedge_uOmega}
Let $f:X\to S$ be a morphism of algebraic varieties, with $S$ smooth. Then, for each $p, q\ge 0$, there exists a functorial wedge map
\[
\wedge_{p,q}: \uOmega^{p}_X\otimes f^*\Omega^{q}_S \to \uOmega^{p+q}_X.
\]
Furthermore, the following diagram commutes
\[\begin{tikzcd}
\uOmega^p_X\otimes f^*\Omega^q_S \otimes f^*\Omega^r_S \ar[r] \ar[d, "\wedge_{p,q}"] &
\uOmega^p_X\otimes f^*\Omega^{q+r}_S \ar[d, "\wedge_{p, q+r}"] \\
\uOmega^{p+q}_X \otimes f^*\Omega^r_S \ar[r, "\wedge_{p+q, r}"] &
\uOmega^{p+q+r}_X.
\end{tikzcd}\]
\end{lemma}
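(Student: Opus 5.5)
We will build the wedge map from a hyperresolution and the projection formula. Fix a hyperresolution $\pi_\kdot: X_\kdot\to X$, so that by Theorem \ref{thm_DB}(1), $\uOmega^p_X\qis R\pi_{\kdot*}\Omega^p_{X_\kdot}$, where $R\pi_{\kdot*}$ denotes the usual total (cubical) derived pushforward. Each $X_\alpha$ is smooth, and $f_\alpha := f\circ\pi_\alpha: X_\alpha\to S$ is a morphism of smooth varieties. On each $X_\alpha$ we have the honest wedge product of K\"ahler forms
\[
\Omega^p_{X_\alpha}\otimes f_\alpha^*\Omega^q_S \to \Omega^{p+q}_{X_\alpha},\qquad \omega\otimes f_\alpha^*\eta\mapsto \omega\wedge f_\alpha^*\eta,
\]
using the pullback $f_\alpha^*\Omega^1_S\to\Omega^1_{X_\alpha}$. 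These maps are compatible with the face/transition maps of the cubical diagram, because pullback of forms commutes with wedge products and $f_\beta = f_\alpha\circ(\text{transition})$. Hence they assemble into a morphism of complexes on the cubical diagram, $\Omega^p_{X_\kdot}\otimes \pi_\kdot^*f^*\Omega^q_S\to \Omega^{p+q}_{X_\kdot}$.

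Next we apply $R\pi_{\kdot*}$ and use the projection formula. Since $f^*\Omega^q_S$ is locally free on $X$, we have
\[
R\pi_{\kdot*}\bigl(\Omega^p_{X_\kdot}\otimes\pi_\kdot^*f^*\Omega^q_S\bigr)\qis R\pi_{\kdot*}\Omega^p_{X_\kdot}\otimes f^*\Omega^q_S\qis \uOmega^p_X\otimes f^*\Omega^q_S,
\]
componentwise and compatibly with the total complex construction. Composing with $R\pi_{\kdot*}$ of the level-wise wedge yields $\wedge_{p,q}$. The commutative square in the lemma already holds on the nose at the level of each $X_\alpha$, by associativity of the wedge product together with multiplicativity of $f_\alpha^*$: $f_\alpha^*(\eta\wedge\theta)=f_\alpha^*\eta\wedge f_\alpha^*\theta$. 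It therefore persists after applying $R\pi_{\kdot*}$ and the projection formula isomorphisms, since these are natural in the sheaf arguments.

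The main obstacle is independence of the choice of hyperresolution, together with functoriality. Given two hyperresolutions, GNPP provides a third dominating both (or a morphism of cubical diagrams over $X$, after refinement). The comparison quasi-isomorphisms in Theorem \ref{thm_DB}(1) are induced by pullback of forms along morphisms of smooth varieties over $S$, and such pullbacks commute with the level-wise wedge maps. The resulting $\wedge_{p,q}$ thus agree in $D^b_{\rm coh}(X)$. For functoriality with respect to $\phi: Y\to X$ over $S$ (with $f_Y=f\circ\phi$), we choose hyperresolutions with a morphism $Y_\kdot\to X_\kdot$ lying over $\phi$, which is possible by \cite{GNPP88}. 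The map $\phi^*$ of Theorem \ref{thm_DB}(2) is then again level-wise pullback of forms, so the square
\[
\uOmega^p_X\otimes f^*\Omega^q_S\to R\phi_*\bigl(\uOmega^p_Y\otimes f_Y^*\Omega^q_S\bigr)\to R\phi_*\uOmega^{p+q}_Y
\]
commutes with $\wedge_{p,q}$ on $X$ by the same multiplicativity, after one more use of the projection formula for $R\phi_*$.

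Finally, as a sanity check, when $X$ is smooth the map $\wedge_{p,q}$ reduces via Theorem \ref{thm_DB}(3) to the usual wedge product, which confirms that the construction is the natural one.
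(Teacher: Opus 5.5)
Your proposal is correct and follows essentially the same route as the paper. Both fix a cubical hyperresolution, define $\wedge_{p,q}$ level-wise by $\eta\otimes\xi\mapsto\eta\wedge\pi_\alpha^*\xi$ through the projection formula, note that the associativity square commutes on that representative, and get independence of the choice from the comparison quasi-isomorphisms induced by a morphism of hyperresolutions, which commute with the wedge maps. Your extra sketch of functoriality along $S$-morphisms $\phi\colon Y\to X$, via a hyperresolution of $\phi$, spells out a point the paper leaves implicit but later uses in Lemma \ref{functoriality}.
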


\begin{proof}
This is a simple extension of the observations in \cite[\S 1]{Kov96}, see also \cite[Construction 3.1]{Nin26}. Fix a cubical hyperresolution $\pi_\kdot: X_\kdot \to X$, so that we have a specific representative of $\uOmega^p_X$, namely
\[
R(\pi_{\kdot})_*\Omega^p_{X_\kdot}\qis \uOmega^p_X.
\]
Using the projection formula to identify forms pulled back from $S$, we may define
\[
\wedge_{p,q}: R(\pi_{\kdot})_*\Omega^p_{X_\kdot}\otimes f^*\Omega^q_S \to R(\pi_{\kdot})_*\Omega^p_{X_\kdot}
\]
determined by
\[
\wedge_{p,q} ((\pi_{\alpha})_*\eta_\alpha \otimes \xi) = (\pi_\alpha)_* (\eta_\alpha \wedge (\pi_\alpha)^* \xi),
\]
where $\pi_\alpha:X_\alpha \to X$ is a component of the hyperresolution chosen, and $\eta_\alpha, \xi$ are the appropriate differential forms. It is then clear that the diagram in the statement of the lemma commutes with the chosen representative. 

We now only need to show that $\wedge_{p,q}$ is independent of the hyperresolution chosen. Suppose $\widetilde{\pi}_{\kdot}:\widetilde{X}_\kdot \to X$ is another cubical hyperresolution and $\alpha:X_\kdot \to \widetilde{X}_\kdot$ is a morphism between them. Then we have a commutative diagram
\[\begin{tikzcd}
R(\pi_\kdot)_* \Omega^p_{X_\kdot} \otimes f^*\Omega^q_S \ar[r, "\wedge_{p,q}"] \ar[d, "\qis"] &
R(\pi_\kdot)_* \Omega^{p+q}_{X_\kdot} \ar[d, "\qis"] \\
R(\widetilde{\pi}_\kdot)_* \Omega^p_{\widetilde{X}_\kdot} \otimes f^*\Omega^q_S \ar[r, "\widetilde{\wedge}_{p,q}"] &
R(\widetilde{\pi}_\kdot)_* \Omega^{p+q}_{\widetilde{X}_\kdot}
\end{tikzcd}\]
where the vertical arrows are quasi-isomorphisms induced by $\alpha$, see \cite[Lemme 2.1.4]{DB81}. Therefore, the equivalence class of $\wedge_{p,q}$ in $D(X)$ is independent of the chosen hyperresolution, as desired.
\end{proof}

The following construction is an analogue to the induction presented in \cite{Kov97}, built upward to match the construction presented in \cite{Nin26}. 

\begin{construction}\label{cons_frakF}
Let $f:X \to S$ be a morphism of algebraic varieties, with $S$ smooth of dimension $k$. With induction on $i\ge 0$, for each fixed $i$, our goal is to define auxiliary objects $\frakF^{p,+}_i$ with maps
\[
\mu^{p,q}_i: \frakF^{p,+}_i \otimes f^* \Omega^q_S\to \frakF^{p+q,+}_i,
\]
for all $p, q\ge 0$. We further require at each step that $\mu^{p,q}_i$ is compatible with wedge maps from the base, meaning that the diagram
\[\begin{tikzcd}
{\frakF^{r,+}_{i} \otimes f^*\Omega_S^{p-r} \otimes f^*\Omega_S^q} & {\frakF^{r,+}_{i} \otimes f^*\Omega_S^{p+q-r}} \\
{\frakF^{p,+}_{i}\otimes f^*\Omega_S^q} & {\frakF^{p+q,+}_{i}}
\ar[from=1-1, to=1-2]
\ar["\mu_i^{r, p-r}", from=1-1, to=2-1]
\ar["\mu_i^{r, p+q-r}", from=1-2, to=2-2]
\ar["{\mu_{i}^{p,q}}"', from=2-1, to=2-2]
\end{tikzcd}\]
commutes for all $p, q, r\ge 0$.

We begin with the $i=0$ case. Let $p\ge 0$ and define the first term to be $ \frakF^{p,+}_0 := \uOmega^p_X$, with 
\[
\mu_0^{p,q}:= \wedge_{p,q}:\uOmega^p_X\otimes f^*\Omega^q_S\to \uOmega^{p+q}_X.
\]
Define $\frakF^{p,+}_1:= \Cone (\mu_0^{0,p})$, that is, it sits in the following distinguished triangle
\begin{equation}\label{first triangle}
\frakF^{0,+}_{0} \otimes f^*\Omega_S^{p} \xrightarrow{\mu^{0,p}_0} \frakF^{p,+}_{0} \longrightarrow \frakF^{p,+}_1 \xrightarrow{+1}.
\end{equation}
For any $q\ge 0$, we obtain a commutative diagram:

\begin{equation*}
\begin{tikzcd}
{\uOmega_X^0 \otimes f^*\Omega_S^{p} \otimes f^*\Omega_S^q} & {\uOmega_X^0\otimes f^*\Omega_S^{p+q}} \\
{\uOmega_X^p \otimes f^*\Omega_S^{q}} & {\uOmega_X^{p+q}} \\
{\frakF^{p,+}_1 \otimes f^*\Omega_S^q} \ar[d, "+1"]& {\frakF^{p+q,+}_1} \ar[d, "+1"]\\
\, & \,
\ar[from=1-1, to=1-2]
\ar[from=1-1, to=2-1]
\ar[from=1-2, to=2-2]
\ar["{\mu_0^{p,q}}"', from=2-1, to=2-2]
\ar[from=2-1, to=3-1]
\ar[from=2-2, to=3-2]
\ar["{\mu_1^{p,q}}"', dashed, from=3-1, to=3-2]
\end{tikzcd}
\end{equation*}
The two columns are distinguished triangles of the form \ref{first triangle} above, and the compatibility is given by Lemma \ref{lem_wedge_uOmega}. We see that it induces a map $\mu_1^{p,q} : \frakF^{p,+}_1 \otimes f^*\Omega^q_S \to \frakF^{p+q,+}_1$ on the bottom. This gives the first step of the induction.

Suppose the inductive hypothesis holds for some $i\ge 0$. In particular, there are maps
\[
\mu_i^{p,q}:\frakF^{p,+}_i \otimes f^*\Omega^q_S \to \frakF^{p+q,+}_i
\]
for all $p,q \geq 0$ that commutes with the wedge maps on the base. Define $\frakF^{p,+}_{i+1} := \Cone(\mu_i^{i, p-i})$, so that
\begin{equation}\label{triangle}
\frakF^{i,+}_{i} \otimes f^*\Omega_S^{p-i} \xrightarrow{\mu^{i,p-i}_i} \frakF^{p,+}_{i} \longrightarrow \frakF^{p,+}_{i+1} \xrightarrow{+1}
\end{equation}
is a distinguished triangle. Note that for $p< i$ this gives $\frakF^{p,+}_{i} \qis \frakF^{p,+}_{i+1}$ since by convention $\Omega_S^{p-i}=0$ for $p<i$. Similar to before, this gives the diagram with exact columns of the form (\ref{triangle})

\begin{equation}\label{box}
\begin{tikzcd}
{\frakF^{i,+}_{i} \otimes f^*\Omega_S^{p-i} \otimes f^*\Omega_S^q} & {\frakF^{i,+}_{i} \otimes f^*\Omega_S^{p+q-i}} \\
{\frakF^{p,+}_{i}\otimes f^*\Omega_S^q} & {\frakF^{p+q,+}_{i}} \\
{\frakF^{p,+}_{i+1} \otimes f^*\Omega_S^q} \ar[d, "+1"] & {\frakF^{p+q,+}_{i+1}}. \ar[d, "+1"]\\
\, & \,
\ar[from=1-1, to=1-2]
\ar[from=1-1, to=2-1]
\ar[from=1-2, to=2-2]
\ar["{\mu_{i}^{p,q}}"', from=2-1, to=2-2]
\ar[from=2-1, to=3-1]
\ar[from=2-2, to=3-2]
\ar["{\mu_{i+1}^{p,q}}"', dashed, from=3-1, to=3-2]
\end{tikzcd}
\end{equation}
The top square commutes by the inductive hypothesis, therefore we obtain the induced bottom map to be $\mu_{i+1}^{p,q}: \frakF^{p,+}_{i+1} \otimes f^*\Omega^q_S \to \frakF^{p+q,+}_{i+1}$. To check the required compatibility, consider the following diagram:

\begin{equation*}
\begin{tikzcd}[cramped, column sep = tiny]
& {\frakF^{i,+}_{i} \otimes f^*\Omega_S^{p-i-1} \otimes f^*\Omega_S^1 \otimes f^*\Omega_S^q} && {\frakF^{i,+}_{i}  \otimes f^*\Omega_S^1 \otimes f^*\Omega_S^{p+q-i-1}} \\
& {\frakF^{i+1,+}_{i} \otimes f^*\Omega_S^{p-i-1} \otimes f^*\Omega_S^q} && {\frakF^{i+1,+}_{i} \otimes f^*\Omega_S^{p+q-i-1}} \\
& {\frakF^{i+1,+}_{i+1} \otimes f^*\Omega_S^{p-i-1} \otimes f^*\Omega_S^q} && {\frakF^{i+1,+}_{i+1} \otimes f^*\Omega_S^{p+q-i-1}} \\
{\frakF^{i,+}_{i} \otimes f^*\Omega_S^{p-i} \otimes f^*\Omega_S^q} & {} & {\frakF^{i,+}_{i} \otimes f^*\Omega_S^{p+q-i}} & {} \\
{\frakF^{p,+}_{i}\otimes f^*\Omega_S^q} && {\frakF^{p+q,+}_{i}} \\
{\frakF^{p,+}_{i+1} \otimes f^*\Omega_S^q} && {\frakF^{p+q,+}_{i+1}} \\
{} && {}
\arrow[from=1-2, to=1-4]
\arrow[from=1-2, to=2-2]
\arrow[from=1-2, to=4-1]
\arrow[from=1-4, to=2-4]
\arrow[from=1-4, to=4-3]
\arrow[from=2-2, to=2-4]
\arrow[from=2-2, to=3-2]
\arrow["{\mu_{i}^{i+1, p-i-1}}"{description}, from=2-2, to=5-1]
\arrow[from=2-4, to=3-4]
\arrow["{\mu_{i}^{i+1, p+q-i-1}}"{description}, from=2-4, to=5-3]
\arrow[dashed, from=3-2, to=3-4]
\arrow["{+1}", from=3-2, to=4-2]
\arrow["{\mu_{i+1}^{i+1, p-i-1}}"{description}, dashed, from=3-2, to=6-1]
\arrow["{+1}", from=3-4, to=4-4]
\arrow["{\mu_{i+1}^{i+1, p+q-i-1}}"{description, pos=0.6}, dashed, from=3-4, to=6-3]
\arrow[from=4-1, to=4-3]
\arrow[from=4-1, to=5-1]
\arrow[from=4-3, to=5-3]
\arrow["{\mu_{i}^{p,q}}"{description}, from=5-1, to=5-3]
\arrow[from=5-1, to=6-1]
\arrow[from=5-3, to=6-3]
\arrow["{\mu_{i+1}^{p, q}}"{description}, dashed, from=6-1, to=6-3]
\arrow["{+1}", from=6-1, to=7-1]
\arrow["{+1}", from=6-3, to=7-3]
\end{tikzcd}
\end{equation*}

The vertical faces commute as they are all diagrams of the form (\ref{box}). The top square commutes by associativity of the wedge product, and the middle square commutes by the inductive hypothesis. Therefore, the bottom square is commutative on any chosen representative. For clarity, the bottom square reads:

\[\begin{tikzcd}
{\frakF^{i+1,+}_{i+1} \otimes f^*\Omega_S^{p-i-1} \otimes f^*\Omega_S^q} & {\frakF^{i+1,+}_{i+1} \otimes f^*\Omega_S^{p+q-i-1}} \\
{\frakF^{p,+}_{i+1} \otimes f^*\Omega_S^q} & {\frakF^{p+q,+}_{i+1}}
\ar[from=1-1, to=1-2]
\ar["{\mu^{i+1,p-i-1}_{i+1}}"', from=1-1, to=2-1]
\ar["{\mu^{i+1,p+q-i-1}_{i+1}}", from=1-2, to=2-2]
\ar["{\mu^{p,q}_{i+1}}"', from=2-1, to=2-2]
\end{tikzcd}\]
This is precisely the compatibility required for the inductive step. Therefore, we conclude the construction of $\frakF^{p,+}_{i}$ and the necessary accompanying maps $\mu_i^{p,q}$ for all $i, p, q\ge 0$. We summarize this information as:
\end{construction}

\begin{definition}\label{defs_frakF}
Following Construction \ref{cons_frakF}, for $p, i\ge 0$, we define
\[
\frakF^{p,+}_{i+1} := \operatorname{Cone}(\frakF^{i,+}_{i} \otimes f^*\Omega_S^{p-i} \xrightarrow{\mu^{i,p-i}_i} \frakF^{p,+}_{i} ).
\]
By convention, we put $\frakF^{p,+}_{i} =0$ if $p< 0$.
Note that $\frakF^{p,+}_0 = \uOmega^p_X$. Furthermore,
\[
0\qis \frakF^{p,+}_{p+1} \qis \frakF^{p,+}_i
\]
for all $i> p$.
\end{definition}

This allows us to define the right relative Du Bois complex.

\begin{definition}\label{def_rel_db} 
Let $f:X\to S$ be a morphism of varieties with $S$ smooth. We define $\uOmega^{p,+}_{X/S} \in \operatorname{Ob}(D(X))$, called the \emph{right relative Du Bois complex} of $f:X\to S$, as follows. Put
\[
\uOmega^{p,+}_{X/S} = \begin{cases}
0 & p<0, \\
\frakF^{p,+}_p & p\ge 0.
\end{cases}
\]
In particular, the triangle in (\ref{triangle}) reads
\[
\uOmega^{i,+}_{X/S} \otimes f^*\Omega^{p-i}_S \to \frakF^{p,+}_i \to \frakF^{p,+}_{i+1} \xrightarrow{+1}.
\]
\end{definition}

We use the objects defined in Definition \ref{defs_frakF} to construct a hyperfiltration of $\uOmega^p_X$.

\begin{construction}\label{cons_hyperfilt}
As in Construction \ref{cons_frakF}, let $f:X\to S$ be a morphism of varieties with $S$ smooth of dimension $k$. Note that by the iterative definition of $\frakF^{p,+}_i$, there is a natural map $\frakF^{p,+}_i \to \frakF^{p,+}_{i+1}$ given by the second map of the triangle (\ref{triangle}). These yield a chain
\[
\uOmega^p_X\qis \frakF^{p,+}_0 \to \frakF^{p,+}_1 \to \dots \to \frakF^{p,+}_p \qis \uOmega^{p,+}_{X/S} \to \frakF^{p,+}_{p+1} \qis 0.
\]
For $0\le q\le p+1$, define
\[
\bbF^{q,+}_S\uOmega^p_X := \operatorname{Cone}\big(\uOmega^p_X \to \frakF^{p,+}_{p-q+1}\big)[-1].
\]
If no confusion will arise, we will suppress the notation of the base $S$ and denote it by $\bbF^{q,+}$. Note that 
\[
\bbF^{0,+}\uOmega^p_X\qis \uOmega^p_X \quad \text{and} \quad \bbF^{p+1,+}\uOmega^p_X =0.
\]
By convention, we also define $\bbF^{q,+}\uOmega^p_X=0$ for $q> p+1$. Now, consider the following diagram:

\[\begin{tikzcd}
{\mathbb{F}^{q+1, +}\uOmega^p_X} & {\uOmega^p_X} & {\mathfrak{F}^{p,+}_{p-q}} \ar[r,"+1"] & \, \\
{\mathbb{F}^{q,+}\uOmega^p_X} & {\uOmega^p_X} & {\mathfrak{F}^{p,+}_{p-q+1}} \ar[r,"+1"] & \, \\
{\mathbb{G}^{q,+}\uOmega^p_X} \ar[d,"+1"] & 0 \ar[d,"+1"] & {(\mathfrak{F}^{p-q,+}_{p-q}\otimes f^*\uOmega^q_S)[1]} \ar[r,"+1"] \ar[d,"+1"] & \, \\
\, & \, & \, & \,
\ar[from=1-1, to=1-2]
\ar[dotted, from=1-1, to=2-1]
\ar[from=1-2, to=1-3]
\ar[equal, from=1-2, to=2-2]
\ar[from=1-3, to=2-3]
\ar[from=2-1, to=2-2]
\ar[from=2-1, to=3-1]
\ar[from=2-2, to=2-3]
\ar[from=2-2, to=3-2]
\ar[from=2-3, to=3-3]
\ar[from=3-1, to=3-2]
\ar[from=3-2, to=3-3]
\end{tikzcd}\]

The third column is the shifted triangle (\ref{triangle}) at $i = p -q$, and the top two rows are triangles by construction. By commutativity of the top right square, we may extend to obtain $\bbF^{q+1,+}\uOmega^p_X\to\bbF^{q,+}\uOmega^p_X$, and thereafter define the associated graded complex $\bbG^{q,+}\uOmega^p_X$ to be its cone. The derived nine-lemma gives exactness of the third row, which implies that
\begin{equation}\label{eqn_graded}
\bbG^{q,+}\uOmega^p_X\qis\uOmega^{p-q,+}_{X/S}\otimes f^*\Omega_S^q.
\end{equation}
\end{construction}

We will take a quick detour to establish the functorial properties of $\bbF^{\kdot,+}_S\uOmega^p_X$.

\begin{lemma} \label{functoriality}
Let $\phi: Y\to X$ be an $S$-morphism, and $g=f\circ\phi$. Then for each $p, q\ge 0$, there are natural maps of hyperfiltrations
\[
\bbF^{q,+}_S\uOmega^p_X \to R\phi_* \bbF^{q,+}_S\uOmega^p_Y.
\]
That is, the maps are compatible with the hyperfiltration and descend to the associated graded. In particular, there are natural maps
\[
\uOmega^{p,+}_{X/S} \to R\phi_* \uOmega^{p,+}_{Y/S}.
\]
\end{lemma}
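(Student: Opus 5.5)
The plan is to run Construction \ref{cons_frakF} in parallel for $X$ and $Y$, and to carry along natural maps $\psi^p_i:\frakF^{p,+}_i(X)\to R\phi_*\frakF^{p,+}_i(Y)$ by induction on $i$. At each stage we require that $\psi^p_i$ intertwine $\mu^{p,q}_i$ for $X$ with $R\phi_*$ of $\mu^{p,q}_i$ for $Y$. Here we use the projection formula $R\phi_*(\mathcal{G})\otimes f^*\Omega^q_S\qis R\phi_*(\mathcal{G}\otimes g^*\Omega^q_S)$, which holds because $f^*\Omega^q_S$ is locally free. We also use that $g^*\Omega^q_S=\phi^*f^*\Omega^q_S$, since $\phi$ is an $S$-morphism.

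For $i=0$, take $\psi^p_0:=\phi^*:\uOmega^p_X\to R\phi_*\uOmega^p_Y$ from Theorem \ref{thm_DB}(2), specialized to graded pieces. We must check that it commutes with the wedge maps of Lemma \ref{lem_wedge_uOmega}. To do this, choose hyperresolutions $X_\kdot\to X$ and $Y_\kdot\to Y$ together with a morphism $Y_\kdot\to X_\kdot$ over $\phi$; such a compatible pair exists by \cite{GNPP88}. On these representatives, pullback of forms commutes with wedging by $\pi^*\xi$ for $\xi\in\Omega^q_S$. This is simply $\phi_\alpha^*(\eta\wedge\pi_\alpha^*\xi)=\phi_\alpha^*\eta\wedge\phi_\alpha^*\pi_\alpha^*\xi$, together with the fact that both composites to $S$ agree. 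Lemma \ref{lem_wedge_uOmega} then shows that the square is independent of the choices.

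For the inductive step, suppose $\psi_i$ has been built compatibly with the $\mu_i$. Applying $R\phi_*$ to the triangle (\ref{triangle}) for $Y$ and using the projection formula gives a distinguished triangle. By the inductive hypothesis, $\psi^i_i\otimes\mathrm{id}$ and $\psi^p_i$ form a commutative square with the first maps of the triangles for $X$ and for $R\phi_*$ of $Y$. Completing this square yields $\psi^p_{i+1}$. We then need to show that $\psi_{i+1}$ is compatible with $\mu_{i+1}$, and naturality must survive as well.

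This is the main obstacle: morphisms induced on cones in the derived category are not unique, so compatibility cannot simply be read off. I would first try to work at the level of honest complexes. Fix the hyperresolution representatives and take mapping cones in $C(X)$. Everything, including the $\mu_i$ and $\psi_i$, is then given by explicit chain maps built from wedge and pullback. Commutativity then holds on the nose, by the same cube argument as in Construction \ref{cons_frakF} with one extra direction. Independence from the choices follows as in Lemma \ref{lem_wedge_uOmega}, by comparing via morphisms of hyperresolutions.

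With the $\psi_i$ in hand, the rest is formal. Setting $i=p$ gives the map $\uOmega^{p,+}_{X/S}\to R\phi_*\uOmega^{p,+}_{Y/S}$. The maps $\frakF^p_0\to\frakF^p_{p-q+1}$ commute with the $\psi$'s, since at chain level they are the canonical maps into cones. Taking cones and shifting by $[-1]$ therefore yields $\bbF^{q,+}_S\uOmega^p_X\to R\phi_*\bbF^{q,+}_S\uOmega^p_Y$. These maps are compatible with the transition maps $\bbF^{q+1,+}\to\bbF^{q,+}$. Passing to cones of those transition maps, and using (\ref{eqn_graded}), gives the maps on $\bbG^{q,+}$, namely $\psi^{p-q}_{p-q}\otimes\mathrm{id}$.
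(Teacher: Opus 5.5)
Your proposal is correct and follows essentially the same route as the paper. You build the maps $\frakF^{p,+}_{i,X}\to R\phi_*\frakF^{p,+}_{i,Y}$ inductively, starting from $\phi^*$ on Du Bois complexes and the functoriality of the wedge map via the projection formula, then complete the square with $\uOmega^p_X\to\frakF^{p,+}_{p-q+1}$ to get the maps on $\bbF^{q,+}$ and pass to cones for $\bbG^{q,+}$. Your chain-level handling of the inductive compatibility, which avoids the non-uniqueness of maps on cones, makes explicit what the paper compresses into ``the same inductive argument as in Construction~\ref{cons_frakF}.''
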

\begin{proof}
We will temporarily use the notation $\frakF^{p,+}_{i, X}$ and $\frakF^{p,+}_{i,Y}$ for the auxiliary object in Definition \ref{defs_frakF} associated to $X$ and $Y$, respectively. Using the triangle (\ref{triangle}), consider the following diagram
\[\begin{tikzcd}
\frakF^{i,+}_{i, X}\otimes f^*\Omega_S^{p-i} \ar[r, "\mu_{i, X}^{i, p-i}"] \ar[d] &
\frakF^{p,+}_{i,X} \ar[r] \ar[d] &
\frakF^{p,+}_{i+1,X}\ar[d, dotted] \ar[r, "+1"] &
\, \\
R\phi_*\frakF^{i,+}_{i,Y}\otimes f^*\Omega_S^{p-i} \ar[r, "R\phi_*\mu_{i,Y}^{i, p-i}"] &
R\phi_*\frakF^{p,+}_{i,Y} \ar[r] &
R\phi_*\frakF^{p,+}_{i+1,Y}. \ar[r, "+1"] &
\,
\end{tikzcd}\]
Note the use of the projection formula to identify the bottom left object. For $i=0$, the vertical arrows are given by the map of Du Bois complexes, as $\frakF^{p,+}_{0,X}\qis \uOmega^p_X$ and similarly for $Y$, see Theorem \ref{thm_DB}(2). Furthermore, the left square commutes by the functoriality of the wedge map $\mu^{p,q}_i=\wedge_{p,q}$ in Lemma \ref{lem_wedge_uOmega}. Therefore, the same inductive argument on $i\ge 0$ as in Construction \ref{cons_frakF} shows that we may extend iteratively to obtain a natural map
\[
\frakF^{p,+}_{i+1,X} \to R\phi_*\frakF^{p,+}_{i+1,Y}.
\]
Moreover, it is compatible with the map on Du Bois complexes $\uOmega^p_{X}\to R\phi_* \uOmega^p_Y$. This allows us to extend the map to the hyperfiltrations, as in the following diagram:
\[\begin{tikzcd}
{\mathbb{F}^{q, +}\uOmega^p_X} &
{\uOmega^p_X} &
{\mathfrak{F}^{p,+}_{p-q+1, X}} \ar[r,"+1"] &
\, \\
{R\phi_*\mathbb{F}^{q,+}\uOmega^p_Y} &
{R\phi_*\uOmega^p_Y} &
{R\phi_*\mathfrak{F}^{p,+}_{p-q+1,Y}} \ar[r,"+1"] &
\,
\ar[from=1-1, to=1-2]
\ar[dotted, from=1-1, to=2-1]
\ar[from=1-2, to=1-3]
\ar[from=1-2, to=2-2]
\ar[from=1-3, to=2-3]
\ar[from=2-1, to=2-2]
\ar[from=2-2, to=2-3]
\end{tikzcd}\]
This gives the first statement of the lemma. Applying the triangle for the associated graded in Construction \ref{cons_hyperfilt} gives the compatibility. Lastly, the map $\uOmega^{p,+}_{X/S}\to R\phi_* \uOmega^{p,+}_{Y/S}$ is simply given by $\frakF^{p,+}_{p,X}\to R\phi_*\frakF^{p,+}_{p,Y}$. This completes the proof.
\end{proof}

\begin{lemma} \label{indephp}
Let $\pi_\kdot: X_\kdot \to X$ be a cubical hyperresolution of $X$. Then for each $p\ge 0$,
\[
R(\pi_\kdot)_*\uOmega^{p, +}_{X_\kdot/S} \qis \uOmega^{p, +}_{X/S}.
\]
\end{lemma}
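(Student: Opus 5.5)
Since each $X_\alpha$ is smooth, $\uOmega^p_{X_\alpha}\qis\Omega^p_{X_\alpha}$ by Theorem \ref{thm_DB}(3), so the objects $\uOmega^{p,+}_{X_\alpha/S}$ are built from the de Rham complexes of the components. The plan is to prove, by induction on $i\ge 0$, the stronger claim that for all $p\ge 0$ there are quasi-isomorphisms
\[
\frakF^{p,+}_{i,X}\qis R(\pi_\kdot)_*\frakF^{p,+}_{i,X_\kdot},
\]
compatible with the maps $\mu_i^{p,q}$ (with $f^*\Omega^q_S$ moved inside $R(\pi_\kdot)_*$ by the projection formula, as in Lemma \ref{functoriality}). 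Here $\frakF^{p,+}_{i,X_\kdot}$ denotes the cubical object obtained by applying Construction \ref{cons_frakF} to each $f\circ\pi_\alpha: X_\alpha\to S$. The transition maps of the cubical diagram are $S$-morphisms, so Lemma \ref{functoriality} makes $\alpha\mapsto\frakF^{p,+}_{i,X_\alpha}$ a cubical system of objects, and $R(\pi_\kdot)_*$ of it makes sense. Taking $i=p$ then gives the lemma.

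For the base case $i=0$, Theorem \ref{thm_DB}(1) gives $\uOmega^p_X\qis R(\pi_\kdot)_*\Omega^p_{X_\kdot}=R(\pi_\kdot)_*\uOmega^p_{X_\kdot}$. By the proof of Lemma \ref{lem_wedge_uOmega}, the wedge map $\wedge_{p,q}$ on $X$ is by definition $R(\pi_\kdot)_*$ of the componentwise wedge maps on the $X_\alpha$. So the compatibility with $\mu_0^{p,q}$ holds on the nose for this representative.

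For the inductive step, I would use that $R(\pi_\kdot)_*$ (a total derived pushforward over the finite cubical diagram) is exact and commutes with cones. Applying it to the componentwise triangles (\ref{triangle}) for the $X_\alpha$ gives a distinguished triangle
\[
R(\pi_\kdot)_*\frakF^{i,+}_{i,X_\kdot}\otimes f^*\Omega^{p-i}_S\to R(\pi_\kdot)_*\frakF^{p,+}_{i,X_\kdot}\to R(\pi_\kdot)_*\frakF^{p,+}_{i+1,X_\kdot}\xrightarrow{+1}.
\]
The natural maps $\frakF^{\cdot,+}_{i,X}\to R(\pi_\kdot)_*\frakF^{\cdot,+}_{i,X_\kdot}$ from Lemma \ref{functoriality} give a morphism from the triangle on $X$ to this one. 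The first two vertical maps are quasi-isomorphisms by induction, so the five lemma for triangles makes the third one a quasi-isomorphism too. The maps $\mu_{i+1}^{p,q}$ on both sides are obtained from the same commuting boxes (\ref{box}), so they correspond under these isomorphisms.

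The main obstacle is non-uniqueness of cones in the derived category. Both the induced maps $\mu_{i+1}$ and the comparison morphism are only determined up to non-canonical choices, so one has to check that the comparison map produced by Lemma \ref{functoriality} is actually compatible with the cone on $X_\kdot$. To avoid this, I would work throughout with explicit representatives. I would fix the Godement/cubical model $R(\pi_\kdot)_*\Omega^\kdot_{X_\kdot}$ as the representative of $\uOmega^\kdot_X$, and define every cone on $X$ as the honest mapping cone of the chain-level wedge maps. Then $\frakF^{p,+}_{i,X}$ is literally equal to the iterated mapping cone $R(\pi_\kdot)_*\frakF^{p,+}_{i,X_\kdot}$, since mapping cones commute with the additive functor $(\pi_\kdot)_*$ applied to flasque resolutions and with $\otimes f^*\Omega^q_S$ (a locally free sheaf). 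The content of the lemma then reduces to the independence of representatives already established in Lemma \ref{lem_wedge_uOmega}.
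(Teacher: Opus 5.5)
Your proposal is correct and is essentially the paper's argument: the comparison maps from Lemma~\ref{functoriality} give a morphism of distinguished triangles, and the five lemma together with induction reduces everything to $\uOmega^p_X\qis R(\pi_\kdot)_*\Omega^p_{X_\kdot}$. The paper displays the hyperfiltration triangles $\bbF^{q+1,+}\to\bbF^{q,+}\to\bbG^{q,+}$ but appeals to the same induction on $i$ through the triangles (\ref{triangle}) that you use, and your chain-level remark about honest mapping cones just makes explicit the choice of representatives that the paper uses tacitly.
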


\begin{proof}
By (\ref{eqn_graded}) and Lemma \ref{functoriality} we obtain a morphism of distinguished triangles:

\[\begin{tikzcd}
\bbF^{q+1, +}\uOmega^p_{X} \ar[r] \ar[d] &
\bbF^{q, +}\uOmega^p_{X} \ar[r] \ar[d] &
\uOmega^{p-q,+}_{X/S}\otimes f^*\Omega_S^q \ar[r, "+1"] \ar[d] &
\, \\
(R\pi_\kdot)_*\bbF^{q+1, +}\uOmega^p_{X_\kdot} \ar[r] &
(R\pi_\kdot)_*\bbF^{q, +}\uOmega^p_{X_\kdot} \ar[r] &
(R\pi_\kdot)_*(\uOmega^{p-q,+}_{X_\kdot/S})\otimes f^*\Omega_S^q. \ar[r, "+1"] &
\,
\end{tikzcd}\]
Note the use of the projection formula for the last term. The proof follows from the same inductive argument as in Lemma \ref{functoriality} and the five-lemma, reducing down to the statement about the Du Bois complex, $\uOmega^p_X\qis R(\pi_\kdot)_*\Omega^p_{X_\kdot}$ (cf. \cite[2.3]{DB81} \cite[V.3.3]{GNPP88}). See also \cite[Lemma 3.3]{Kov97} for an analogous statement.
\end{proof}

We collect the results in this section in the following theorem. For easy comparison, we also include the left relative Du Bois complex by \cite{Kov97}.

\begin{theorem}\label{thm_relDB}
Let $f: X\to S$ be a morphism of algebraic varieties with $\dim X=n$, $\dim S=k$ and $S$ smooth. Then there exist objects $\uOmega^{p, +}_{X/S}, \uOmega^{p, -}_{X/S}$ in $D^b_{\rm coh}(X)$ for all $p\in \bbZ$ with the following properties:
\begin{enumerate}
\item \label{reldb1} $\uOmega^{p,+}_{X/S}\qis 0$ for $p< 0$, $\uOmega^{p, -}_{X/S}\qis 0$ for $p>n-k$.

\item \label{reldb2} There exist hyperfiltrations $\bbF^{q,+}\uOmega^p_X, \bbF^{q,-}\uOmega^p_X$ of $\uOmega^p_X$ with $0\le q\le k+1$, such that
\[
\bbF^{0,+}\uOmega^p_X \qis \bbF^{0,-}\uOmega^p_X \qis \uOmega^p_X, \quad \bbF^{k+1,+}\uOmega^p_X \qis \bbF^{k+1,-}\uOmega^p_X \qis 0,
\]
whose associated graded complexes are, with $0\le q\le k$,
\[
\bbG^{q,+}\uOmega^p_X\qis \uOmega^{p-q,+}_{X/S}\otimes f^*\Omega^q_S, \quad \bbG^{q,-}\uOmega^p_X\qis \uOmega^{p-q,-}_{X/S}\otimes f^*\Omega^q_S.
\]

\item \label{reldb3} $\bbF^{q,+}\uOmega^p_X$ and $\bbF^{q,-}\uOmega^p_X$ are functorial, i.e. if $\phi: Y \to X$ is an S-morphism, then there are natural maps $\bbF^q\uOmega^p_X \to R\phi_*\bbF^q\uOmega^p_Y$ in $D(X)$ forming the commutative diagram of distinguished triangles:
\[\begin{tikzcd}
{\mathbb{F}^{q+1, \pm}\uOmega^p_X} & {\mathbb{F}^{q, \pm}\uOmega^p_X} & {\mathbb{G}^{q, \pm}\uOmega^p_X} & {} \\
{R\phi_*\mathbb{F}^{q+1, \pm}\uOmega^p_Y} & {R\phi_*\mathbb{F}^{q, \pm}\uOmega^p_Y} & {R\phi_*\mathbb{G}^{q, \pm}\uOmega^p_Y} & {}
\ar[from=1-1, to=1-2]
\ar[from=1-1, to=2-1]
\ar[from=1-2, to=1-3]
\ar[from=1-2, to=2-2]
\ar["{+1}", from=1-3, to=1-4]
\ar[from=1-3, to=2-3]
\ar[from=2-1, to=2-2]
\ar[from=2-2, to=2-3]
\ar["{+1}", from=2-3, to=2-4]
\end{tikzcd}\]

In particular, there are natural maps in D(X) forming the commutative diagram:

\[\begin{tikzcd}
{\uOmega^{p}_{X}} & {\uOmega^{p, \pm}_{X/S}} \\
{R\phi_*\uOmega^{p}_{Y}} & {R\phi_*\uOmega^{p, \pm}_{Y/S}}
\ar[from=1-1, to=1-2]
\ar[from=1-1, to=2-1]
\ar[from=1-2, to=2-2]
\ar[from=2-1, to=2-2]
\end{tikzcd}\]

\item \label{reldb4} If $f$ is smooth, then $\uOmega^{p,+}_{X/S}\qis \uOmega^{p,-}_{X/S}\qis \Omega^p_{X/S}$.

\item\label{reldb5} Let $\{\nu_i: U_i\into X\}$ be a finite open cover of $X$. Then
\[
\uOmega^{p,\pm}_{X/S} \qis R(\nu_\kdot)_* \uOmega^{p,\pm}_{U_\kdot /S}.
\]
\end{enumerate}
\end{theorem}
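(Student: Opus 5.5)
The plan is to handle the right relative complex $\uOmega^{p,+}_{X/S}$ using Constructions \ref{cons_frakF} and \ref{cons_hyperfilt} and Lemmas \ref{functoriality} and \ref{indephp}. For the left relative complex $\uOmega^{p,-}_{X/S}$ we simply quote \cite[Theorem 3.4]{Kov97}. We go through the items in order.

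\textbf{Items (1)--(3).} Item (1) for $\uOmega^{p,+}_{X/S}$ is built into Definition \ref{def_rel_db}. Coherence and boundedness follow by induction on $i$. The base case is $\frakF^{p,+}_0=\uOmega^p_X\in D^b_{\rm coh}(X)$ (Theorem \ref{thm_DB}). The inductive step uses that each $\frakF^{p,+}_{i+1}$ is a cone of a map between objects of $D^b_{\rm coh}(X)$, since $f^*\Omega^q_S$ is locally free.

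For item (2), Construction \ref{cons_hyperfilt} gives a hyperfiltration indexed by $0\le q\le p+1$ with graded pieces (\ref{eqn_graded}). To reindex to $0\le q\le k+1$, note that $\Omega^q_S=0$ for $q>k$. The triangle (\ref{triangle}) at $i=p-q$ with $q>k$ then gives $\frakF^{p,+}_{p-q}\qis\frakF^{p,+}_{p-q+1}$, so $\bbF^{q,+}\uOmega^p_X\qis\bbF^{q+1,+}\uOmega^p_X$ for $k<q\le p$. Since $\bbF^{p+1,+}\uOmega^p_X=0$, this yields $\bbF^{k+1,+}\uOmega^p_X\qis 0$. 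If instead $p<k$, the convention $\bbF^{q,+}=0$ for $q>p+1$ already gives this vanishing, and the graded pieces with $p<q\le k$ vanish, consistent with $\uOmega^{p-q,+}_{X/S}=0$.

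Item (3) is Lemma \ref{functoriality}. The square relating $\uOmega^p$ and $\uOmega^{p,+}$ comes from the compatibility of the maps $\frakF^{p,+}_i\to R\phi_*\frakF^{p,+}_i$ with the chains $\frakF^{p,+}_0\to\cdots\to\frakF^{p,+}_p$, which were built inductively in that lemma.

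\textbf{Item (5).} Apply the same inductive scheme to the Čech map $\frakF^{p,+}_{i,X}\to R(\nu_\kdot)_*\frakF^{p,+}_{i,U_\kdot}$. This map exists by functoriality along the open immersions (Lemma \ref{functoriality}), and it is compatible with the wedge maps. At $i=0$ it is a quasi-isomorphism by Theorem \ref{thm_DB}(4). The projection formula identifies $R(\nu_\kdot)_*(\frakF\otimes f^*\Omega^q_S)$ with $R(\nu_\kdot)_*\frakF\otimes f^*\Omega^q_S$. The five lemma applied to the morphism of triangles (\ref{triangle}) then propagates the quasi-isomorphism to all $i$, and in particular to $i=p$.

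\textbf{Item (4).} This is the main content. Assume $f$ is smooth, so $X$ is smooth and $\uOmega^p_X\qis\Omega^p_X$; moreover the wedge maps of Lemma \ref{lem_wedge_uOmega} are the honest wedge products (take the trivial hyperresolution). We prove, by induction on $i$, that $\frakF^{p,+}_i\qis\Omega^p_X/F^i\Omega^p_X$. Here $F^i\Omega^p_X$ is the image of $\Omega^{p-i}_X\otimes f^*\Omega^i_S$, i.e.\ the standard filtration whose graded pieces are $f^*\Omega^q_S\otimes\Omega^{p-q}_{X/S}$. In other words, $\frakF^{p,+}_i$ is the quotient of $\Omega^p_X$ by forms with at least $i$ base factors. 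We also prove that, under this identification, $\mu_i^{p,q}$ is induced by the wedge product.

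At the inductive step, $\frakF^{i,+}_i\qis\Omega^i_X/F^1\Omega^i_X\cong\Omega^i_{X/S}$, and the map
\[
\Omega^i_{X/S}\otimes f^*\Omega^{p-i}_S\to\Omega^p_X/F^{p-i+1}
\]
is injective with cokernel $\Omega^p_X/F^{p-i}$. The cone of an injective map of sheaves is quasi-isomorphic to its cokernel. At the end, $\frakF^{p,+}_p\qis\Omega^p_X/F^1=\Omega^p_{X/S}$.

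The main obstacle is that cones are only defined up to non-unique isomorphism. So the induced maps $\mu_{i+1}^{p,q}$ must be shown to agree with the honest wedge maps on quotients, and not merely some completion of the square in (\ref{box}). To handle this, we work with actual complexes: choose a strict representative, namely the filtered locally free sheaves with genuine wedge maps, and take the cones in $C(X)$. The resulting maps of cones are then canonical and descend to the quotient sheaves, which makes the identification rigorous.
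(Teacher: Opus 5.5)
Your proposal is correct and follows the paper's proof. Items (1)--(3) come directly from Constructions \ref{cons_frakF}, \ref{cons_hyperfilt} and Lemma \ref{functoriality}. For (4) and (5), which the paper only calls standard consequences of the absolute case with references to Kov\'acs and Ning, you supply the expected arguments (the five-lemma induction along the \v{C}ech map, and the explicit computation in the smooth case), and you also make explicit the reindexing of the hyperfiltration from $0\le q\le p+1$ to $0\le q\le k+1$, which the paper leaves implicit.

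One slip to fix: in (4) the inductive identification should read $\frakF^{p,+}_i\qis\Omega^p_X/F^{p-i+1}\Omega^p_X$, the quotient by forms with at least $p-i+1$ base factors, so that $\frakF^{p,+}_0=\Omega^p_X$ and $\frakF^{p,+}_{p+1}=0$. You wrote $\Omega^p_X/F^i\Omega^p_X$, which would give $\frakF^{p,+}_0=0$; your inductive computation itself already uses the correct index.
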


\begin{proof}
The statements for $\uOmega^{p,-}_{X/S}$ follow from \cite[Theorem 3.4]{Kov97}. For the right complex $\uOmega^{p,+}_{X/S}$, its existence and (\ref{reldb1}) follow from Construction \ref{cons_frakF} and Definition \ref{def_rel_db}. (\ref{reldb2}) is Construction \ref{cons_hyperfilt} and (\ref{eqn_graded}). (\ref{reldb3}) is Lemma \ref{functoriality}. The rest are standard consequences of the analogous properties of the absolute Du Bois complex and the constructions of the hyperfiltrations, see \cite[Theorem 1.3]{Kov96}, \cite[Theorem 3.4]{Kov97}, \cite[Theorem 3.2]{Nin26}.
\end{proof}

\begin{corollary}
Let $f:X\to S$ be a morphism of algebraic varieties such that $S$ is smooth. Then for any $p\ge 0$ and any locally free sheaf $\scrE$ on $X$ there exists a spectral sequence
\[
E^{q,i}_1 = \bbH^{q+i}(X, \uOmega^{p-q, \pm}_{X/S} \otimes f^*\Omega^q_S\otimes \scrE) \Rightarrow \bbH^{q+i}(X, \uOmega^p_X\otimes \scrE).
\]
\end{corollary}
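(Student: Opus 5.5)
The plan is to obtain the spectral sequence as the spectral sequence of the hyperfiltration $\bbF^{\kdot,\pm}\uOmega^p_X$ from Theorem~\ref{thm_relDB}(\ref{reldb2}), after tensoring with $\scrE$ and applying $R\Gamma(X,-)$. Since $\scrE$ is locally free, $-\otimes\scrE$ is exact, so it preserves distinguished triangles. Tensoring the triangles
\[
\bbF^{q+1,\pm}\uOmega^p_X \to \bbF^{q,\pm}\uOmega^p_X \to \bbG^{q,\pm}\uOmega^p_X \xrightarrow{+1}
\]
with $\scrE$ and applying $R\Gamma$ gives a finite tower
\[
0\qis R\Gamma(\bbF^{k+1,\pm}\otimes\scrE)\to R\Gamma(\bbF^{k,\pm}\otimes\scrE)\to\cdots\to R\Gamma(\bbF^{0,\pm}\otimes\scrE)\qis R\Gamma(\uOmega^p_X\otimes\scrE).
\]
The cones of this tower are $R\Gamma(\bbG^{q,\pm}\uOmega^p_X\otimes\scrE)\qis R\Gamma(\uOmega^{p-q,\pm}_{X/S}\otimes f^*\Omega^q_S\otimes\scrE)$.

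Taking long exact cohomology sequences of these triangles produces an exact couple:
\[
D_1^{q,i}=\bbH^{q+i}(X,\bbF^{q,\pm}\uOmega^p_X\otimes\scrE),\qquad E_1^{q,i}=\bbH^{q+i}(X,\bbG^{q,\pm}\uOmega^p_X\otimes\scrE).
\]
The maps are the ones induced by $\phi_q$, by the projection to the graded piece, and by the connecting morphism. The standard machinery of exact couples, as in Deligne's treatment of spectral sequences attached to a tower in a triangulated category, then yields a spectral sequence starting at $E_1$. Because the filtration has finite length ($q$ ranges over $0,\dots,k$, with $\bbF^{0}$ equal to the whole object and $\bbF^{k+1}\qis0$), the spectral sequence degenerates at a finite page and converges to the cohomology of $\bbF^{0,\pm}\otimes\scrE$, which is $\bbH^{q+i}(X,\uOmega^p_X\otimes\scrE)$. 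The abutment filtration is the image filtration coming from $\bbF^{q,\pm}$.

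The only point needing care is that a hyperfiltration is merely a sequence of morphisms in the derived category, not a filtered complex. I therefore would not try to build a filtered representative. Instead I would use the exact couple directly, which needs only the distinguished triangles and the functoriality of $\bbH^\kdot$. This avoids the usual non-functoriality of cones, since no compatibility beyond individual triangles is required. The identification of the $E_1$ terms is then just (\ref{eqn_graded}) for the $+$ case, and \cite[Theorem 3.4]{Kov97} for the $-$ case, tensored with $\scrE$.
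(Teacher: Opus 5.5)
Your proposal is correct and is essentially the paper's argument. The paper applies Kov\'acs's general spectral sequence for a hyperfiltration \cite[Theorem 1.2.2]{Kov97} to the hyperfiltration of Theorem \ref{thm_relDB}(\ref{reldb2}), and cites \cite[Corollary 3.5]{Kov97} directly for the $-$ case; your exact-couple construction from the tensored triangles is simply an unpacking of that theorem.
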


\begin{proof}
The statement for $\uOmega^{p,-}_{X/S}$ is \cite[Corollary 3.5]{Kov97}. The analogous statement for $\uOmega^{p,+}_{X/S}$ follows immediately from Theorem \ref{thm_relDB}(\ref{reldb2}) and \cite[Theorem 1.2.2]{Kov97}.
\end{proof}

The next proposition shows that the relative Du Bois complexes $\uOmega^{p,\pm}_{X/S}$ agree with the existing definition in the literature when the base is a smooth curve (cf. \cite{Kov96, Nin26}), justifying the notation. We include the comparison for the left relative Du Bois complex of Kov\'acs here for completeness, as it is not included in \cite{Kov97}.

\begin{proposition}\label{prop_curve_compare}
Let $f:X \to S$ be a morphism of algebraic varieties, where $S$ is a smooth curve. Then $\uOmega^{p, +}_{X/S}, \uOmega^{p,-}_{X/S}$ agree with the objects defined in \cite[Theorem 3.2]{Nin26} for all $p\in \bbN$.
\end{proposition}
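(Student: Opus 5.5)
When $k=1$ we have $\Omega^q_S=0$ for $q\ge 2$, so I would first unwind Construction \ref{cons_frakF} in this case and show that the induction collapses to exactly one nontrivial cone at each level. For $p\ge 0$ and $i\ge 0$, the triangle (\ref{triangle}) has first term $\frakF^{i,+}_i\otimes f^*\Omega^{p-i}_S$. This term vanishes unless $p-i\in\{0,1\}$.

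If $p=i$, the map $\mu^{p,0}_p$ is the identity (the $q=0$ wedge map is the identity by construction, by induction on $i$), so $\frakF^{p,+}_{p+1}\qis 0$. This is consistent with Definition \ref{defs_frakF}. If $p=i+1$, the triangle becomes
\[
\uOmega^{p-1,+}_{X/S}\otimes f^*\Omega^1_S \xrightarrow{\mu^{p-1,1}_{p-1}} \frakF^{p,+}_{p-1}\to \frakF^{p,+}_p=\uOmega^{p,+}_{X/S}\xrightarrow{+1}.
\]
For $i<p-1$ the cone is trivial, so $\frakF^{p,+}_{p-1}\qis\frakF^{p,+}_0=\uOmega^p_X$. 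Hence on a curve the construction reduces to
\[
\uOmega^{p-1,+}_{X/S}\otimes f^*\Omega^1_S \xrightarrow{\ \mu\ }\uOmega^p_X\to\uOmega^{p,+}_{X/S}\xrightarrow{+1},
\]
where $\mu$ is the map induced inductively from $\wedge_{p-1,1}$ through the previous triangle.

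The second step is to compare this with \cite[Theorem 3.2]{Nin26} (see also \cite[Construction 3.1]{Nin26}). There the right relative complex is defined by induction on $p$, starting from $\uOmega^{0,+}_{X/S}=\uOmega^0_X$. Each $\uOmega^{p,+}_{X/S}$ is the cone of a map $\uOmega^{p-1,+}_{X/S}\otimes f^*\Omega^1_S\to\uOmega^p_X$, obtained by descending the wedge map $\uOmega^{p-1}_X\otimes f^*\Omega^1_S\to\uOmega^p_X$ along $\uOmega^{p-1}_X\to\uOmega^{p-1,+}_{X/S}$. Descent works because the composite with $\uOmega^{p-2,+}_{X/S}\otimes (f^*\Omega^1_S)^{\otimes 2}\to \uOmega^{p-1}_X\otimes f^*\Omega^1_S$ vanishes, since $\Omega^2_S=0$.

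I would then argue by induction on $p$ that the two objects and their structure maps $\uOmega^{p}_X\to\uOmega^{p,+}_{X/S}$ agree. Given an identification at level $p-1$ compatible with $\uOmega^{p-1}_X\to\uOmega^{p-1,+}_{X/S}$, both maps $\mu$ are completions of the same square over the wedge map $\wedge_{p-1,1}$ of Lemma \ref{lem_wedge_uOmega}. That lemma also shows that the wedge map is the one used in \cite{Nin26}, since both are defined on a hyperresolution by the same formula.

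The main obstacle is that induced maps on cones are not unique in a triangulated category. To fix this, I would check that the descended map is unique. Uniqueness holds if $\Hom(\uOmega^{p-2,+}_{X/S}\otimes f^*\Omega^1_S[1],\uOmega^p_X)$ does not introduce ambiguity. If that Hom-vanishing fails, my fallback is to work at the level of an explicit hyperresolution representative, where both constructions are literal mapping cones of the same chain map. That is legitimate by Lemma \ref{indephp}, which reduces everything to smooth $X_\alpha$: there $\Omega^\bullet$ is a genuine complex and the cones can be compared on the nose.

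For $\uOmega^{p,-}_{X/S}$, I would argue the same way, using Kov\'acs' downward induction from \cite{Kov97}. Over a curve it collapses to the single triangle $\uOmega^{p-1,-}_{X/S}\otimes f^*\Omega^1_S\to\uOmega^p_X\to\uOmega^{p,-}_{X/S}$ twisted appropriately, matching \cite{Kov96} and the left complex of \cite[Theorem 3.2]{Nin26}.
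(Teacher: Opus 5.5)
Your proposal is correct and follows essentially the same route as the paper. On a curve the induction collapses to the single triangle $\uOmega^{p-1,+}_{X/S}\otimes f^*\Omega^1_S\to\uOmega^p_X\to\uOmega^{p,+}_{X/S}\xrightarrow{+1}$, whose first map factors the wedge map and is matched by induction on $p$ with \cite[Theorem 3.2]{Nin26}; the left complex is handled by the analogous collapse of Kov\'acs' downward induction, where it is $\uOmega^p_X\to\uOmega^{p,-}_{X/S}$ that is constructed. Your chain-level fallback is what the paper actually does: it fixes a representative $K^\kdot_p$ of $\uOmega^p_X$ on which the double wedge through $\Omega^2_S=0$ vanishes on the nose, so the Hom-vanishing route (whose group should carry $(f^*\Omega^1_S)^{\otimes 2}$ and need not vanish) can be dropped.
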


\begin{proof}
To avoid confusion, we will always use the notations $\uOmega^{p, +}_{X/S}, \uOmega^{p,-}_{X/S}$ to mean the objects constructed in Theorem \ref{thm_relDB}, and use $\uuOmega^{p, +}_{X/S}, \uuOmega^{p,-}_{X/S}$ to temporarily denote those constructed in \cite[Theorem 3.2]{Nin26}. The statement trivially holds if $p<0$ for $\uOmega^{p,+}_{X/S}$ and if $p> \dim X- \dim f(X)$ for $\uOmega^{p,-}_{X/S}$. So let $p$ be in the appropriate range where $\frakF^{p,+}_{i}, \frakF^{p,-}_{i}$ are iteratively defined as cones.

Since $\dim S=1$, $\Omega^q_S=0$ for $q>1$. We see that for all $0\le i\le p-2$,
\[
\frakF^{p,+}_{i+1} = \Cone (\frakF^{i,+}_i\otimes f^* \Omega^{p-i}_S \to \frakF^{p,+}_i) \qis \frakF^{p,+}_i.
\]
In other words, for all $0\le i\le p-1$
\[
\frakF^{p,+}_{i}\qis \frakF^{p,+}_{0} \qis \uOmega^p_X.
\]
Therefore the triangle for $\uOmega^{p,+}_{X/S} =\frakF^{p,+}_p$ reads
\[
\uOmega^{p-1,+}_{X/S}\otimes f^*\Omega^1_S \to \uOmega^p_X \to \uOmega^{p,+}_{X/S}\xrightarrow{+1}.
\]
Furthermore, for a fixed representative $K^\kdot_p$ of $\uOmega^p_X$, the first map is given by the natural factorization of the wedge map $K^\kdot_{p-1}\otimes f^*\Omega^1_S \to K^\kdot_{p}$. Therefore starting with the base case $\uOmega^{-1,+}_{X/S}\qis \uuOmega^{-1,+}_{X/S}=0$, we see that this agrees precisely with the defining triangle of $\uuOmega^{p,+}_{X/S}$ in \cite[Theorem 3.2(2)]{Nin26}, and consequently
\[
\uOmega^{p,+}_{X/S}\qis \uuOmega^{p,+}_{X/S}
\]
for all $p\ge 0$. A similar argument works for $\uOmega^{p, -}_{X/S}$, with the difference being the inductive cone construction. The distinguished triangle in \cite[2.4]{Kov97a} (see also \cite[\S 2]{Kov97}) reads, after index shift
\[
\frakF^{p, -}_{i+1} = \Cone(\frakF^{p, -}_{i}\otimes f^*\omega_S \to \frakF^{i, -}_{i}\otimes f^*\Omega^{i-p+1}_S)[-1] \qis \frakF^{p, -}_{i}\otimes f^*\omega_S.
\]
The last quasi-isomorphism holds for all $i< p-1$ since $\Omega^{i-p+1}_S=0$ for $i-p+1<0$. In other words, for all $0\le i\le p-1$,
\[
\frakF^{p,-}_{i}\qis \frakF^{p,-}_{0} \otimes f^*\omega_S^{\otimes i} \qis \uOmega^{n-p}_X\otimes f^*\omega_S^{\otimes i-1}.
\]
Now, the triangle for $i=p-1$ reads
\[
\frakF^{p,-}_p \to \frakF^{p,-}_{p-1}\otimes f^*\omega_S \to \frakF^{p-1, -}_{p-1} \xrightarrow{+1}.
\]
Using the definition $\uOmega^{n-p-1,-}_{X/S}=\frakF^{p,-}_p\otimes f^*\omega_S^{\otimes -p}$, this becomes
\[
\uOmega^{n-p-1,-}_{X/S} \otimes f^*\omega_S^{\otimes p} \to
\uOmega^{n-p}_X\otimes f^*\omega_S^{\otimes p-1} \to
\uOmega^{n-p,-}_{X/S} \otimes f^*\omega_S^{\otimes p-1} \xrightarrow{+1},
\]
which gives the defining triangle for $\uOmega^{p,-}_{X/S}$, cf. \cite[Theorem 1.3.2]{Kov96}. 
Note that after twisting with $f^*\omega_S^{p}$, the second map in the triangle factorizes the wedge map $K^{\kdot}_{n-p}\otimes f^*\omega_S\to K^{\kdot}_{n-p+1}$. Therefore, the same argument as the right complex concludes that
\[
\uOmega^{p,-}_{X/S}\qis \uuOmega^{p,-}_{X/S},
\]
once we identify the base case $\uOmega^{k,-}_{X/S}\qis \uuOmega^{k,-}_{X/S}=0$ for $k> \dim X-\dim f(X)$, as desired.
\end{proof}

\medskip
\section{Higher dimensional base change} \label{section base change}

The notions in \cite[Definition 4.1]{Nin26} extend naturally to our setting over a higher dimensional base.

\begin{definition}\label{def_bc}
Let $f: X\to S$ be a morphism of algebraic varieties with $\dim X=n$, $\dim S=k$ and $S$ smooth. Let $s\in S$, $m\in\bbZ$. We say the fiber $X_s$ satisfies
\begin{itemize}
\item \emph{left-$m$-base change} for the relative Du Bois complex if there is a natural quasi-isomorphism
\[
\uOmega^{p, -}_{X/S}\otimes^{\tL}_X\scrO_{X_s}\qis \uOmega^{p}_{X_s},
\]
for all $p\le m$.

\item \emph{right-$m$-base change} for the relative Du Bois complex if there is a natural quasi-isomorphism
\[
\uOmega^{p, +}_{X/S}\otimes^{\tL}_X\scrO_{X_s}\qis \uOmega^{p}_{X_s},
\]
for all $p\le m$.

\item \emph{$m$-base change} for the relative Du Bois complex if it satisfies both left- and right-$m$-base change.

\item \emph{base change} (resp. \emph{left-base change}, \emph{right-base change}) for the relative Du Bois complex if it satisfies $m$-base change (resp. left-$m$-base change, right-$m$-base change) for all $m\in \bbZ$.
\end{itemize}

We will frequently omit ``for the relative Du Bois complex'' if it is clear from the context.
\end{definition}

The following lemma identifies the relative complex over a point.

\begin{lemma}\label{lem_bc_dim0}
Let $f:X\to S$ be a morphism of algebraic varieties, with $S$ smooth of dimension $0$. Then for each $p\in \bbN$,
\[
\uOmega^{p,+}_{X/S}\qis \uOmega^{p,-}_{X/S} \qis \uOmega^p_X.
\]
\end{lemma}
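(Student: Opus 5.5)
Since $S$ is smooth of dimension $0$, we have $\Omega^q_S = 0$ for all $q>0$ and $\Omega^0_S=\scrO_S$. Hence every wedge map $\mu^{i,p-i}_i$ with $p-i>0$ has source $\frakF^{i,+}_i\otimes f^*\Omega^{p-i}_S = 0$. The plan is to use this vanishing to collapse the inductive cone construction of Definition \ref{defs_frakF} for the right complex, and to argue analogously (or via Theorem \ref{thm_relDB}) for the left complex.

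\textbf{Right complex.} For fixed $p\ge 0$ and $0\le i\le p-1$, the triangle (\ref{triangle}) reads
\[
0 \to \frakF^{p,+}_i \to \frakF^{p,+}_{i+1}\xrightarrow{+1},
\]
so $\frakF^{p,+}_{i+1}\qis\frakF^{p,+}_i$. Inducting on $i$ from $\frakF^{p,+}_0=\uOmega^p_X$, we get $\uOmega^{p,+}_{X/S}=\frakF^{p,+}_p\qis\uOmega^p_X$. This is the same collapse as in the proof of Proposition \ref{prop_curve_compare}, but now it holds for every $i<p$, because no $\Omega^{\ge1}_S$ survives. The only point to check is $i=p$, which would give $\frakF^{p,+}_{p+1}=\Cone(\uOmega^{p,+}_{X/S}\otimes\scrO\to \frakF^{p,+}_p)\qis0$. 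This step is not needed for the definition, since $\uOmega^{p,+}_{X/S}$ is $\frakF^{p,+}_p$. The resulting identification is moreover natural: it is the composite of the maps $\frakF^{p,+}_i\to\frakF^{p,+}_{i+1}$ from Construction \ref{cons_hyperfilt}.

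\textbf{Left complex.} The cleanest route is through the hyperfiltration of Theorem \ref{thm_relDB}(\ref{reldb2}) with $k=0$. In that case $\bbF^{0,-}\uOmega^p_X\qis\uOmega^p_X$ and $\bbF^{1,-}\uOmega^p_X\qis0$, so the single graded piece gives
\[
\uOmega^p_X\qis\bbG^{0,-}\uOmega^p_X\qis\uOmega^{p,-}_{X/S}\otimes f^*\Omega^0_S=\uOmega^{p,-}_{X/S}.
\]
The same argument applied to the $+$ hyperfiltration recovers the right-complex statement independently.

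\textbf{Main obstacle.} The subtle point is the range condition in Theorem \ref{thm_relDB}(\ref{reldb1}): $\uOmega^{p,-}_{X/S}\qis0$ for $p>n-k=n$. This is compatible with the claim, since $\uOmega^p_X\qis0$ for $p>\dim X$ by Du Bois theory, as the hyperresolution components have dimension at most $n$. If instead I unwind Kov\'acs's descending cone construction as in the proof of Proposition \ref{prop_curve_compare}, then $f^*\omega_S=\scrO_X$, all twists disappear, and the triangles collapse identically. One must still check that the reindexing $\uOmega^{n-p-1,-}_{X/S}=\frakF^{p,-}_p$ lands on $\uOmega^{n-p-1}_X$. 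I expect this index bookkeeping to be the only real work, and I would rely on the hyperfiltration argument above to avoid it.
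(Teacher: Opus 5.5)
Your proof is correct. For $\uOmega^{p,+}_{X/S}$ it is exactly the paper's argument. For $0\le i\le p-1$ the term $\frakF^{i,+}_i\otimes f^*\Omega^{p-i}_S$ vanishes, so every map in the chain $\uOmega^p_X=\frakF^{p,+}_0\to\cdots\to\frakF^{p,+}_p=\uOmega^{p,+}_{X/S}$ is a quasi-isomorphism.

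For $\uOmega^{p,-}_{X/S}$ you take a different route.
\begin{itemize}
\item The paper unwinds Kov\'acs's descending cone construction for smooth $X$. There $f^*\omega_S$ is trivial and the relevant forms on $S$ vanish, so $\Omega^p_X\qis\frakF^{n-p,-}_0\qis\cdots\qis\frakF^{n-p,-}_{n-p}\qis\uOmega^{p,-}_{X/S}$. It then passes to singular $X$ by hyperresolution descent \cite[Lemma 3.3]{Kov97}.
\item You instead specialize Theorem \ref{thm_relDB}(\ref{reldb2}) to $k=0$. The hyperfiltration has a single step, so $\bbG^{0,-}\uOmega^p_X=\operatorname{Cone}(\bbF^{1,-}\uOmega^p_X\to\bbF^{0,-}\uOmega^p_X)\qis\uOmega^p_X$. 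The theorem identifies this graded piece with $\uOmega^{p,-}_{X/S}\otimes f^*\Omega^0_S=\uOmega^{p,-}_{X/S}$.
\end{itemize}
This is a legitimate formal deduction: Theorem \ref{thm_relDB} precedes the lemma, and its $-$ part is imported from \cite[Theorem 3.4]{Kov97}. Your route is shorter, handles singular $X$ directly, and avoids the index bookkeeping of the descending construction. The cost is that it extracts the result from Kov\'acs's theorem as a black box. The paper's route shows concretely why the construction degenerates and which maps realize the identification.

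Two small corrections to your side remarks; neither affects the proof.
\begin{itemize}
\item Running the hyperfiltration argument on the $+$ side is not an independent proof. By Construction \ref{cons_hyperfilt}, $\bbF^{1,+}\uOmega^p_X=\operatorname{Cone}(\uOmega^p_X\to\frakF^{p,+}_p)[-1]$. So for $k=0$ the vanishing $\bbF^{k+1,+}\uOmega^p_X\qis 0$ is literally the claim; the construction only gives $\bbF^{p+1,+}\uOmega^p_X=0$ for free. Your direct collapse is what actually proves it.
\item The reindexing $\uOmega^{n-p-1,-}_{X/S}=\frakF^{p,-}_p$ in your last paragraph is the curve-base normalization from Proposition \ref{prop_curve_compare}. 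Over a point the paper's normalization is $\frakF^{n-p,-}_{n-p}\qis\uOmega^{p,-}_{X/S}$, so $\frakF^{p,-}_p$ corresponds to $\uOmega^{n-p,-}_{X/S}$. The check as you phrased it would therefore come out off by one.
\end{itemize}
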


\begin{proof}
Recall as in Construction \ref{cons_frakF} that we iteratively defined
\[
\frakF^{p,+}_{i+1} = \Cone (\frakF^{i,+}_{i}\otimes f^*\Omega^{p-i}_S\to \frakF^{p,+}_{i}).
\]
Since $\Omega^{p-i}_S=0$ for all $i\ne p$, we have
\[
\uOmega^p_X\qis \frakF^{p,+}_{0} \qis \dots \qis \frakF^{p,+}_{p} \qis \uOmega^{p,+}_{X/S}.
\]
For the left complex $\uOmega^{p,-}_{X/S}$, it can be defined similarly with a representative of $\uOmega^p_X$ as in Theorem \ref{thm_relDB} and then the proof would follow. However, since \cite{Kov97} defined it first assuming $X$ is smooth, we will check this case instead and use \cite[Lemma 3.3]{Kov97} to conclude that it holds for singular $X$ as well. When $X$ is smooth with $\dim X=n$, we have
\[
\frakF^{p,-}_0=\Omega^{n-p}_X\otimes f^*\omega_S^{-1}, 
\]
Iteratively, \cite[Proposition 2.3]{Kov97} defines
\[
\frakF^{p,-}_{i+1} = \Cone(\frakF^{p,-}_i\otimes f^*\omega_S \to \frakF^{i,-}_i\otimes f^*\Omega^{-p+i}_S)[-1],
\]
which then simplifies to
\[
\Omega^p_X \qis \frakF^{n-p,-}_{0} \qis \dots \qis \frakF^{n-p,-}_{n-p} \qis \uOmega^{p,-}_{X/S}.
\]
This concludes the proof for both complexes.
\end{proof}

\begin{lemma}\label{lem_hyperfilt_natural_map}
Let $f:X \to S$, $g:S\to T$ be morphisms of algebraic varieties, with $S, T$ smooth. Then there exists a functorial map 
\[
\uOmega^{p,+}_{X/T} \to \uOmega^{p,+}_{X/S}
\]
and a compatible map of hyperfiltrations $\bbF^{\kdot, +}_T \to \bbF^{\kdot, +}_S$. That is, for each $p, q\ge 0$, the diagram
\[\begin{tikzcd}[row sep=small]
\bbF^{q+1,+}_T \uOmega^p_X \ar[r] \ar[d]& 
\bbF^{q,+}_T \uOmega^p_X \ar[d] \\
\bbF^{q+1,+}_S \uOmega^p_X \ar[r]& 
\bbF^{q,+}_S \uOmega^p_X
\end{tikzcd}\]
commutes. In particular, they are also compatible with the maps from $\uOmega^p_X$.
\end{lemma}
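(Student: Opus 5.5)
We construct the comparison map by induction on $i$, exactly parallel to Construction \ref{cons_frakF} and to the proof of Lemma \ref{functoriality}. We temporarily write $\frakF^{p,+}_{i,T}$ and $\frakF^{p,+}_{i,S}$ for the auxiliary objects of Definition \ref{defs_frakF} built from $g\circ f: X\to T$ and from $f: X\to S$, with multiplication maps $\mu^{p,q}_{i,T}$ and $\mu^{p,q}_{i,S}$. The only input linking the two bases is the pullback of forms $g^*: \Omega^q_T\to \Omega^q_S$, which after applying $f^*$ gives $(g\circ f)^*\Omega^q_T \to f^*\Omega^q_S$.

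We aim to construct maps $\theta^p_i: \frakF^{p,+}_{i,T}\to \frakF^{p,+}_{i,S}$ for all $i,p\ge 0$ satisfying the compatibility
\[
\theta^{p+q}_i\circ \mu^{p,q}_{i,T} \;=\; \mu^{p,q}_{i,S}\circ(\theta^p_i\otimes f^*g^*).
\]
For the base case $i=0$ we take $\theta^p_0=\mathrm{id}$ on $\uOmega^p_X$. The required compatibility then says that the wedge map of Lemma \ref{lem_wedge_uOmega} with a $T$-form equals the wedge with its pullback to $S$. This holds on any fixed hyperresolution representative, since $(\pi_\alpha)^*(g\circ f)^*\xi = (\pi_\alpha)^* f^*(g^*\xi)$, and so it holds in $D(X)$ by the independence argument in Lemma \ref{lem_wedge_uOmega}.

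For the inductive step we map the defining triangle (\ref{triangle}) for $T$ to the one for $S$:
\[
\frakF^{i,+}_{i,T}\otimes (g f)^*\Omega^{p-i}_T \to \frakF^{p,+}_{i,T}\to \frakF^{p,+}_{i+1,T}\xrightarrow{+1}
\quad\longrightarrow\quad
\frakF^{i,+}_{i,S}\otimes f^*\Omega^{p-i}_S \to \frakF^{p,+}_{i,S}\to \frakF^{p,+}_{i+1,S}\xrightarrow{+1}.
\]
The left vertical map is $\theta^i_i\otimes f^*g^*$ and the middle one is $\theta^p_i$. The left square commutes by the inductive compatibility, so TR3 gives a map $\theta^p_{i+1}$ on the cones. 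To propagate the compatibility to $\mu_{i+1}$, we repeat the cube argument of Construction \ref{cons_frakF}, now with a third direction ($T$ versus $S$). This uses three facts: the defining squares (\ref{box}) map to each other, the pullback $g^*$ is compatible with wedge products on the bases, and the inductive hypothesis holds.

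Setting $i=p$ gives $\uOmega^{p,+}_{X/T}\to\uOmega^{p,+}_{X/S}$. The maps $\theta^p_i$ are compatible with the chain $\frakF^{p,+}_{i}\to\frakF^{p,+}_{i+1}$ of Construction \ref{cons_hyperfilt}, since that chain is the second map of the triangles just compared, and $\theta^p_0=\mathrm{id}$. Hence applying TR3 once more to the triangles defining $\bbF^{q,+}$ produces maps $\bbF^{q,+}_T\uOmega^p_X\to\bbF^{q,+}_S\uOmega^p_X$ compatible with the maps to $\uOmega^p_X$. The commutativity of the square in the statement is then checked on the diagram of Construction \ref{cons_hyperfilt}, mapped from $T$ to $S$.

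\textbf{Main obstacle.} The main obstacle is that TR3 cone-fillers are not unique, so commutativity of the induced squares in the cube (and naturality) is not automatic in the derived category. I would handle this the way the paper implicitly does for Construction \ref{cons_frakF}. Fix a hyperresolution and the explicit representatives $R(\pi_\kdot)_*\Omega^p_{X_\kdot}$, on which the wedge maps and $g^*$ are honest chain maps. Then take actual mapping cones at every stage, so every $\theta^p_i$ is a genuine chain map between mapping cones and all squares commute on the nose. Functoriality follows as in Lemma \ref{functoriality}, and independence of the hyperresolution follows via Lemma \ref{indephp}.
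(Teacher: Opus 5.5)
Your proposal is correct and is essentially the paper's argument. You induct on $i$, mapping the defining triangles (\ref{triangle}) for $g\circ f$ to those for $f$ via the identity on $\uOmega^p_X$ and the pullback $(g f)^*\Omega^{q}_T\to f^*\Omega^{q}_S$, and then transport the result to the hyperfiltrations. You are in fact more explicit than the paper in two places: you propagate the compatibility with the $\mu$'s, and you pin down the TR3 choices by using honest mapping cones on a fixed hyperresolution model.

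The only difference in route is minor. You get $\bbF^{q,+}_T\uOmega^p_X\to\bbF^{q,+}_S\uOmega^p_X$ directly from the triangles $\bbF^{q,+}\uOmega^p_X\to\uOmega^p_X\to\frakF^{p,+}_{p-q+1}$, whereas the paper extends inductively in $q$ from the graded triangles. Your version has the advantage that the square needed for TR3 commutes by construction.

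One citation is off. Independence of the hyperresolution is better justified by the comparison argument of Lemma \ref{lem_wedge_uOmega}, carried through the cones by the five-lemma, than by Lemma \ref{indephp}. The latter identifies objects, not the maps you construct.
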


\begin{proof}
Recall in Construction \ref{cons_frakF}, we iteratively defined intermediate objects $\frakF^{p,+}_{i,S}, \frakF^{p,+}_{i,T}$ for $p, i \ge 0$ associated to the morphisms $f:X\to S$, $g\circ f:X\to T$, with base case
\[
\frakF^{p,+}_{0,S} \qis \frakF^{p,+}_{0, T}\qis \uOmega^p_X.
\]
Here the additional subscripts distinguish the bases $S$ and $T$. The defining triangle for $i=0$ reads
\[\begin{tikzcd}
\uOmega^0_X\otimes (gf)^*\Omega^{p}_T \ar[r] \ar[d] &
\uOmega^p_X \ar[r] \ar[d, equal] &
\frakF^{p, +}_{1, T} \ar[r, "+1"] \ar[d, dotted] &
\, \\
\uOmega^0_X\otimes f^*\Omega^{p}_S \ar[r] &
\uOmega^p_X \ar[r] &
\frakF^{p, +}_{1, S} \ar[r, "+1"] &
\,
\end{tikzcd}\]
where the first map is induced by the natural inclusion of base forms $g^*\Omega^{p}_T\to \Omega^p_S$. At each step $i>0$, we may obtain comparison maps from the diagram of (\ref{box}),
\[\begin{tikzcd}
\frakF^{i, +}_{i, T}\otimes (gf)^*\Omega^{p-i}_T \ar[r] \ar[d] &
\frakF^{p, +}_{i, T} \ar[r] \ar[d] &
\frakF^{p, +}_{i+1, T} \ar[r, "+1"] \ar[d, dotted] &
\, \\
\frakF^{i, +}_{i, S}\otimes f^*\Omega^{p-i}_S \ar[r] &
\frakF^{p, +}_{i, S} \ar[r] &
\frakF^{p, +}_{i+1, S} \ar[r, "+1"] &
\,
\end{tikzcd}\]
using instead $g^*\Omega^{p-i}_T \to \Omega^{p-i}_S$. For $i+1=p$, this gives the desired $\uOmega^{p,+}_{X/T}\to \uOmega^{p,+}_{X/S}$. Now, we populate the maps for the hyperfiltrations analogously using Construction \ref{cons_hyperfilt}. We have a diagram
\[\begin{tikzcd}
\bbF^{q+1,+}_T \uOmega^p_X \ar[r] \ar[d, dotted] & 
\bbF^{q,+}_T \uOmega^p_X \ar[r] \ar[d] &
\uOmega^{p-q,+}_{X/T}\otimes (gf)^* \Omega^q_T \ar[r, "+1"] \ar[d] &
\, \\
\bbF^{q+1,+}_S \uOmega^p_X \ar[r] & 
\bbF^{q,+}_S \uOmega^p_X \ar[r] &
\uOmega^{p-q,+}_{X/S}\otimes f^* \Omega^q_S \ar[r, "+1"] &
\,
\end{tikzcd}\]
identifying the associated graded as in Theorem \ref{thm_relDB}(\ref{reldb2}). When $q=0$, the two hyperfiltration objects are naturally quasi-isomorphic
\[
\bbF^{0, +}_T \uOmega^p_X \qis \bbF^{0, +}_S \uOmega^p_X \qis \uOmega^p_X.
\]
Hence we may iteratively extend to $\bbF^{q+1}_T\to \bbF^{q+1}_S$ using the functorial map $\uOmega^{p-q,+}_{X/T} \to \uOmega^{p-q,+}_{X/S}$ defined previously, combined with the inclusion of base forms. This shows all the claimed properties.
\end{proof}

The following is a relative version of the triangle for the relative Du Bois complex over a curve.

\begin{proposition}\label{prop_rel_dim1_triangle}
Let $f: X\to S$, $g:S\to T$ be morphisms of algebraic varieties, with $S, T$ smooth and $g$ smooth of relative dimension $1$. Then for each $p\in \bbN$, there is a triangle
\[
\uOmega^{p-1,+}_{X/S}\otimes f^*\Omega^1_{S/T} \to \uOmega^{p,+}_{X/T}\to \uOmega^{p,+}_{X/S} \xrightarrow{+1}.
\]
\end{proposition}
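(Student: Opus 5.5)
We argue by induction on $p$, comparing the iterative constructions over $T$ and over $S$ via Lemma \ref{lem_hyperfilt_natural_map}. Since $g$ is smooth of relative dimension $1$, we have the exact sequence $0\to g^*\Omega^1_T\to\Omega^1_S\to\Omega^1_{S/T}\to 0$ with $\Omega^1_{S/T}$ a line bundle. Taking exterior powers gives, for each $q$, a short exact sequence
\[
0\to g^*\Omega^q_T\to \Omega^q_S\to g^*\Omega^{q-1}_T\otimes\Omega^1_{S/T}\to 0.
\]
The base case $p=0$ is clear: $\uOmega^{0,+}_{X/T}\qis\uOmega^{0,+}_{X/S}\qis\uOmega^0_X$ and the left term vanishes. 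Moreover, the map $\uOmega^{p,+}_{X/T}\to\uOmega^{p,+}_{X/S}$ is the one produced by Lemma \ref{lem_hyperfilt_natural_map}. It therefore suffices to identify its cone (shifted) with $\uOmega^{p-1,+}_{X/S}\otimes f^*\Omega^1_{S/T}$.

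To do this, we work with the hyperfiltrations rather than the iterated cones. By Construction \ref{cons_hyperfilt}, $\uOmega^{p,+}_{X/S}\qis\Cone(\bbF^{1,+}_S\uOmega^p_X\to\uOmega^p_X)$, and similarly over $T$. So the cone of $\uOmega^{p,+}_{X/T}\to\uOmega^{p,+}_{X/S}$ is, up to shift, the cone $C_1$ of $\bbF^{1,+}_T\uOmega^p_X\to\bbF^{1,+}_S\uOmega^p_X$ (by the octahedral axiom applied with the common $\uOmega^p_X$). Writing $C_q:=\Cone(\bbF^{q,+}_T\uOmega^p_X\to\bbF^{q,+}_S\uOmega^p_X)$ and using the compatible triangles in Lemma \ref{lem_hyperfilt_natural_map}, the nine-lemma gives triangles
\[
C_{q+1}\to C_q\to \Cone\big(\uOmega^{p-q,+}_{X/T}\otimes (gf)^*\Omega^q_T\to \uOmega^{p-q,+}_{X/S}\otimes f^*\Omega^q_S\big)\xrightarrow{+1}.
\]
By induction on $p$, the cone of $\uOmega^{p-q,+}_{X/T}\to\uOmega^{p-q,+}_{X/S}$ is $\uOmega^{p-q-1,+}_{X/S}\otimes f^*\Omega^1_{S/T}[1]$ for $q\ge1$. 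Combining this with the exact sequence of base forms above, each graded cone becomes an extension of $\uOmega^{p-q,+}_{X/S}\otimes f^*(g^*\Omega^{q-1}_T\otimes\Omega^1_{S/T})$ and $\uOmega^{p-q-1,+}_{X/S}\otimes f^*(g^*\Omega^q_T\otimes\Omega^1_{S/T})[1]$.

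The cleanest way to organize this is to show that $C_1[-1]$ carries a hyperfiltration whose graded pieces are $\uOmega^{p-1-j,+}_{X/S}\otimes f^*(g^*\Omega^j_T\otimes\Omega^1_{S/T})$. This is precisely the hyperfiltration $\bbF^{j,+}_T\uOmega^{p-1}_{X/S}$-type structure obtained by applying Construction \ref{cons_hyperfilt} to $\uOmega^{p-1,+}_{X/S}$ relative to $T$, twisted by $f^*\Omega^1_{S/T}$.

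I expect the main obstacle to be exactly this identification: actually producing a map $\uOmega^{p-1,+}_{X/S}\otimes f^*\Omega^1_{S/T}\to C_1[-1]$, rather than merely matching graded pieces. My first attempt would be a direct route. Locally on $S$, choose a splitting $\Omega^1_S\cong g^*\Omega^1_T\oplus\Omega^1_{S/T}$, which exists after shrinking; then glue using Theorem \ref{thm_relDB}(\ref{reldb5}). With the splitting, the wedge map $\uOmega^{p-1}_X\otimes f^*\Omega^1_{S/T}\to\uOmega^p_X$ composed to $\uOmega^{p,+}_{X/T}$ should factor through $\uOmega^{p-1,+}_{X/S}\otimes f^*\Omega^1_{S/T}$. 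The factorization should follow by the inductive compatibility squares (\ref{box}), because the $T$-forms are killed in passing to $S$-relative pieces. One then checks that the resulting triangle is distinguished by comparing graded pieces with the five-lemma as in Lemma \ref{indephp}.

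Alternatively, the statement is local and natural, so one can reduce via Lemma \ref{indephp} to $X$ smooth on each component of a hyperresolution. Over a point where $f$ is smooth, the triangle is the usual exact sequence $0\to\Omega^{p-1}_{X/S}\otimes f^*\Omega^1_{S/T}\to\Omega^p_{X/T}\to\Omega^p_{X/S}\to0$. In general one would need to carry the argument on a filtered-complex representative where the maps are honest quotients.
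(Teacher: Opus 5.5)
\textbf{There is a genuine gap: the identification of $C_1$ is never carried out, and the structure you propose for it is wrong.}

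Your set-up coincides with the paper's. You use the cones $C_q=\operatorname{Cone}(\bbF^{q,+}_T\uOmega^p_X\to\bbF^{q,+}_S\uOmega^p_X)$ and the nine-lemma triangles $C_{q+1}\to C_q\to D_q\xrightarrow{+1}$. You reduce to showing $C_1\qis\uOmega^{p-1,+}_{X/S}\otimes f^*\Omega^1_{S/T}$; note that it is $C_1$, not $C_1[-1]$, that must be identified. And you describe the graded cones $D_q$ by induction on $p$. But identifying $C_1$ is the whole content of the proposition, and you leave it open.

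Put $P_j=\uOmega^{p-1-j,+}_{X/S}\otimes f^*\Omega^1_{S/T}\otimes(gf)^*\Omega^j_T$. Your own computation gives triangles $P_q[1]\to D_q\to P_{q-1}\xrightarrow{+1}$. So for $q\ge1$ each $P_q$ occurs twice among the graded pieces of the filtration $C_\bullet$ of $C_1$: shifted in $D_q$ and unshifted in $D_{q+1}$. These contributions must cancel, leaving only $P_0$. A hyperfiltration on $C_1$ with graded pieces $P_0,P_1,P_2,\dots$ would make $C_1$ too big. For $X\to S\to T$ smooth, $C_1=\Omega^{p-1}_{X/S}\otimes f^*\Omega^1_{S/T}=P_0$, whereas your filtration would give it rank $\sum_j\operatorname{rk}P_j$; for example $P_1\neq0$ for $p=2$, $\dim T\ge1$. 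Nor does Construction \ref{cons_hyperfilt} produce any filtration of $\uOmega^{p-1,+}_{X/S}$ ``relative to $T$''.

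\textbf{The missing idea} is the descending induction on $q$ that proves the cancellation.

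\emph{Base case.} At $q=\dim S$ one has $\bbF^{q,+}_T\uOmega^p_X=0=\bbF^{q+1,+}_S\uOmega^p_X$. Hence $C_q\qis D_q\qis\uOmega^{p-q,+}_{X/S}\otimes f^*\Omega^q_S\qis P_{q-1}$, using $\Omega^{\dim S}_T=0$.

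\emph{Inductive step.} Shrink so that $\Omega^q_S\cong g^*\Omega^q_T\oplus(\Omega^1_{S/T}\otimes g^*\Omega^{q-1}_T)$. Factor $\alpha^{p,q}$ as $\beta^{p,q}$ (the comparison $\uOmega^{p-q,+}_{X/T}\to\uOmega^{p-q,+}_{X/S}$ tensored with $(gf)^*\Omega^q_T$) followed by the split inclusion of base forms. This gives $D_q\qis P_q[1]\oplus P_{q-1}$ with $P_q[1]=\operatorname{Cone}(\beta^{p,q})$. The paper argues that the composite $\operatorname{Cone}(\beta^{p,q})\to D_q\to C_{q+1}[1]\qis P_q[1]$ is an isomorphism. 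Hence $C_q\to D_q\to C_{q+1}[1]\xrightarrow{+1}$ splits and $C_q\qis P_{q-1}$, which at $q=1$ is what you need.

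Your fallback routes do not replace this.
\begin{itemize}
\item Factoring the wedge map $\uOmega^{p-1}_X\otimes f^*\Omega^1_{S/T}\to\uOmega^{p,+}_{X/T}$ through $\uOmega^{p-1,+}_{X/S}\otimes f^*\Omega^1_{S/T}$ needs the composite from $\bbF^{1,+}_S\uOmega^{p-1}_X\otimes f^*\Omega^1_{S/T}$ to vanish in $D(X)$. That is a compatibility between the $S$- and $T$-hyperfiltrations which you have not established.
\item Even then, the resulting map is not unique. ``Comparing graded pieces by the five-lemma'' presupposes a morphism of filtered triangles that you have not built.
\item Passing to a hyperresolution makes the $X_\alpha$ smooth but not the maps $X_\alpha\to S$. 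So you are still not in the situation of honest relative differentials.
\end{itemize}
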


\begin{proof}
This is an exercise of patience working with the hyperfiltrations associated with $\uOmega^{p,+}_{X/S}$ and $\uOmega^{p,+}_{X/T}$. We proceed first by induction on $p$, noting
\[
\uOmega^{-1, +}_{X/S}\qis 0 \quad \text{and} \quad \uOmega^0_X \qis \uOmega^{0,+}_{X/T} \qis \uOmega^{0,+}_{X/S}.
\]
Suppose the triangle in the statement holds up to some fixed $p\ge 0$. Using Lemma \ref{lem_hyperfilt_natural_map}, define
\[
C^{p, q} = \Cone(\bbF^{q,+}_T\uOmega^p_X \to \bbF^{q,+}_S\uOmega^p_X).
\]
Consider the following diagram associated to the hyperfiltrations in Theorem \ref{thm_relDB},
\begin{equation}\begin{tikzcd}\label{eqn_rel_dim1_diagram}
\bbF^{q+1,+}_T\uOmega^p_X \ar[r] \ar[d] &
\bbF^{q,+}_T\uOmega^p_X \ar[r] \ar[d] &
\uOmega^{p-q,+}_{X/T}\otimes (gf)^*\Omega^q_T \ar[d, "\alpha^{p,q}"] \ar[r, "+1"] &
\, \\
\bbF^{q+1,+}_S\uOmega^p_X \ar[r] \ar[d] &
\bbF^{q,+}_S\uOmega^p_X \ar[r] \ar[d] &
\uOmega^{p-q,+}_{X/S}\otimes f^*\Omega^q_S \ar[d] \ar[r, "+1"] &
\, \\
C^{p, q+1} \ar[r] \ar[d, "+1"]&
C^{p, q} \ar[r] \ar[d, "+1"]&
D^{p, q} \ar[r, "+1"] \ar[d, "+1"]&
\, \\
\, & \, & \, & \,
\end{tikzcd}\end{equation}
where using the derived nine-lemma there is an object $D^{p,q}$ making the rows and columns exact triangles. 

\begin{claim}\label{claim_rel_dim1_triangle}
For $q\ge 1$, we have
\[
C^{p,q} \qis \uOmega^{p-q, +}_{X/S}\otimes f^*\Omega^1_{S/T}\otimes (gf)^*\Omega^{q-1}_T.
\]
\end{claim}

Assuming Claim \ref{claim_rel_dim1_triangle}, the $q=0$ diagram of (\ref{eqn_rel_dim1_diagram}) reads
\[\begin{tikzcd}
\bbF^{1,+}_T\uOmega^p_X \ar[r] \ar[d] &
\uOmega^p_X \ar[r] \ar[d, equal] &
\uOmega^{p,+}_{X/T} \ar[d] \ar[r, "+1"] &
\, \\
\bbF^{1,+}_S\uOmega^p_X \ar[r] \ar[d] &
\uOmega^p_X \ar[r] \ar[d] &
\uOmega^{p,+}_{X/S} \ar[d] \ar[r, "+1"] &
\, \\
C^{p, 1} \ar[r] \ar[d, "+1"]&
0 \ar[r] \ar[d, "+1"]&
C^{p, 1}[1]. \ar[r, "+1"] \ar[d, "+1"]&
\, \\
\, & \, & \, & \,
\end{tikzcd}\]
We can simply read off the shifted third column to obtain the desired triangle
\[
C^{p, 1}\qis \uOmega^{p-1,+}_{X/S}\otimes f^*\Omega^1_{S/T} \to \uOmega^{p,+}_{X/T}\to \uOmega^{p,+}_{X/S} \xrightarrow{+1}
\]
This completes the proof of Proposition \ref{prop_rel_dim1_triangle}.
\end{proof}

\begin{proof}[Proof of Claim \ref{claim_rel_dim1_triangle}]
We work with the same inductive assumptions as in the proof of Proposition \ref{prop_rel_dim1_triangle}, i.e. assuming the proposition holds up to some $p\ge 0$. Now, the claim is local on $X$ by Theorem \ref{thm_relDB}(\ref{reldb5}). Note that $g$ is smooth of relative dimension $1$, so after shrinking the base if necessary, we make use of the local splitting
\[
\Omega^q_S\isom g^*\Omega^q_T \oplus (\Omega^1_{S/T}\otimes g^*\Omega^{q-1}_T).
\]
Thus for $q\ge 1$, we may apply the inductive hypothesis to the third column of (\ref{eqn_rel_dim1_diagram}), and obtain
\[
D^{p,q}\qis \left( \uOmega^{p-q-1,+}_{X/S}\otimes  f^*\Omega^1_{S/T}\otimes (gf)^*\Omega^{q}_T\right)[1]\oplus \left(\uOmega^{p-q, +}_{X/S}\otimes f^*\Omega^1_{S/T}\otimes (gf)^*\Omega^{q-1}_T\right)[0].
\]
Note that as in Theorem \ref{thm_relDB}, $\bbF^{q,+}_S\uOmega^p_X = 0$ for $q>\dim S$ and $\bbF^{q,+}_T\uOmega^p_X = 0$ for $q>\dim T = \dim S-1$. Therefore when $q=\dim S$, (\ref{eqn_rel_dim1_diagram}) consists of all zeroes except in the lower right square, and yields
\[
C^{p,q}\qis D^{p,q} \qis \uOmega^{p-q,+}_{X/S}\otimes f^*\Omega^q_S \qis \uOmega^{p-q,+}_{X/S}\otimes f^* \Omega^1_{S/T}\otimes (gf)^*\Omega^{q-1}_T.
\]
Here we used the splitting of $\Omega^q_S$ and that $\Omega^q_T=0$ for $q=\dim S$.

Now we proceed by descending induction on $q$ from $q=\dim S$, with the base case just shown. Note that as in Lemma \ref{lem_hyperfilt_natural_map}, the map $\alpha^{p,q}$ in (\ref{eqn_rel_dim1_diagram}) factors as
\[
\uOmega^{p-q,+}_{X/T}\otimes (gf)^*\Omega^q_T \xrightarrow{\beta^{p,q}} \uOmega^{p-q,+}_{X/S}\otimes (gf)^*\Omega^q_T \to \uOmega^{p-q,+}_{X/S}\otimes f^*\Omega^q_S
\]
with the last map induced by a split injection $(gf)^*\Omega^q_T\into f^*\Omega^q_S$. Using both inductive hypotheses on $p$ and $q$ for the source and the target, the composition realizes a quasi-isomorphism
\[
\uOmega^{p-q-1,+}_{X/S}\otimes f^*\Omega^1_{S/T} \otimes (gf)^*\Omega^{q}_T [1] \qis
\Cone(\beta^{p,q}) \to
D^{p,q} \qis \Cone(\alpha^{p,q}) \to
C^{p,q+1}[1].
\]
Furthermore, the first map is split injective induced by the splitting $(gf)^*\Omega^q_T\into f^*\Omega^q_S$. More precisely, it comes from this diagram extending left inverses:
\[\begin{tikzcd}[row sep=scriptsize]
\uOmega^{p-q,+}_{X/T} \otimes (gf)^* \Omega^q_T \ar[r, equal] \ar[d, "\beta^{p,q}"] &
\uOmega^{p-q,+}_{X/T} \otimes (gf)^* \Omega^q_T \ar[d, "\alpha^{p,q}"] \\
\uOmega^{p-q,+}_{X/S} \otimes (gf)^* \Omega^q_T \ar[r] \ar[d] &
\uOmega^{p-q,+}_{X/S} \otimes f^* \Omega^q_S \ar[d] \ar[l, bend right=15]\\
\Cone(\beta^{p,q}) \ar[r] \ar[d, "+1"]&
\Cone(\alpha^{p,q}). \ar[l, bend right=15, dotted] \ar[d, "+1"] \\
\, & \,
\end{tikzcd}\]
Combining the above, we see that the triangle coming from (\ref{eqn_rel_dim1_diagram})
\[
C^{p,q}\to D^{p,q} \to C^{p,q+1}[1] \xrightarrow{+1}
\]
is split. In particular, again applying the inductive hypothesis on $q$ identifying $C^{p,q+1}$ shows that
\[
C^{p,q} \qis \uOmega^{p-q, +}_{X/S}\otimes f^*\Omega^1_{S/T}\otimes (gf)^*\Omega^{q-1}_T,
\]
thus proving the claim.
\end{proof}

\begin{theorem} \label{base change}
Let $f: X\to S$ be a flat morphism of algebraic varieties with $\dim X=n$, $\dim S=k$ and $S$ smooth. Let $s\in S$ be a closed point and suppose the fiber $X_s$ has strict-$m$-Du Bois singularities. Then $X_s$ satisfies right-$m$-base change for the relative Du Bois complex, that is, for all $0\le p\le m$,
\[
\uOmega^{p, +}_{X/S}\otimes^{\tL}_X\scrO_{X_s} \qis \uOmega^p_{X_s}.
\]
\end{theorem}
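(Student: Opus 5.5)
We argue by induction on $k=\dim S$, reducing to the curve case \cite[Theorem 1.1]{Nin26} via Proposition \ref{prop_rel_dim1_triangle}. Base change is local on $X$ and $S$ by Theorem \ref{thm_relDB}(\ref{reldb5}), and $S$ is smooth at $s$. We may therefore shrink $S$ around $s$ and choose local coordinates $t_1,\dots,t_k$ vanishing at $s$. Let $T=\mathbb{A}^{k-1}$ and let $g=(t_1,\dots,t_{k-1}):S\to T$. After shrinking, $g$ is smooth of relative dimension $1$. Set $t=g(s)$ and let $S_t=g^{-1}(t)$, a smooth curve through $s$, and $X_t=f^{-1}(S_t)$. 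The case $k=0$ is Lemma \ref{lem_bc_dim0}.

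\textbf{Step 1: restrict to the curve family.} Since $f$ is flat, $X_t\subset X$ is cut out by the regular sequence $f^*t_1,\dots,f^*t_{k-1}$. The restriction $f_t:X_t\to S_t$ is flat with fiber $X_s$ over $s$. By \cite[Theorem 1.1]{Nin26}, together with Proposition \ref{prop_curve_compare}, $X_s$ satisfies right-$m$-base change for $f_t$:
\[
\uOmega^{p,+}_{X_t/S_t}\otimes^{\tL}\scrO_{X_s}\qis\uOmega^p_{X_s}\quad\text{for } p\le m.
\]
It therefore suffices to prove
\[
\uOmega^{p,+}_{X/S}\otimes^{\tL}_X\scrO_{X_t}\qis\uOmega^{p,+}_{X_t/S_t}\quad\text{for } p\le m.
\]
Tensor products compose because $\scrO_{X_s}=\scrO_{X_t}\otimes^{\tL}_X\scrO_{X_s}$, by flatness and regularity of the sequence.

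\textbf{Step 2: induct on the base dimension using $X\to T$.} The composite $gf:X\to T$ is flat, being a composite of flat maps, and its fiber over $t$ is $X_t$. The fiber $X_t$ is strict-$m$-Du Bois near $X_s$. This should follow from strict-$m$-Du Bois being open in flat families and stable under generization, via the deformation invariance results of \cite{Nin26} applied to the family $f_t$; since the statement is local, we may shrink $X$ to a neighborhood of $X_s$. By induction on $\dim T=k-1$, we get
\[
\uOmega^{p,+}_{X/T}\otimes^{\tL}\scrO_{X_t}\qis\uOmega^p_{X_t}\quad\text{for } p\le m.
\]
Now tensor the triangle of Proposition \ref{prop_rel_dim1_triangle},
\[
\uOmega^{p-1,+}_{X/S}\otimes f^*\Omega^1_{S/T}\to\uOmega^{p,+}_{X/T}\to\uOmega^{p,+}_{X/S}\xrightarrow{+1},
\]
with $\scrO_{X_t}$, and use $\Omega^1_{S/T}|_{S_t}=\Omega^1_{S_t}$. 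The middle term becomes $\uOmega^p_{X_t}$. Inducting on $p$, the first term becomes $\uOmega^{p-1,+}_{X_t/S_t}\otimes f_t^*\Omega^1_{S_t}$. Comparing with the defining triangle of $\uOmega^{p,+}_{X_t/S_t}$ from the proof of Proposition \ref{prop_curve_compare}, we obtain
\[
\uOmega^{p,+}_{X/S}\otimes^{\tL}\scrO_{X_t}\qis\uOmega^{p,+}_{X_t/S_t}.
\]
The base case $p=0$ reads $\uOmega^{0,+}_{X/S}=\uOmega^0_X$.

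\textbf{Main obstacle.} The hardest step is checking that the first map of the restricted triangle agrees with the wedge map $\uOmega^{p-1,+}_{X_t/S_t}\otimes f_t^*\Omega^1_{S_t}\to\uOmega^p_{X_t}$. This requires naturality of the constructions under the restriction $X_t\to X$, using Lemma \ref{functoriality}-type functoriality and compatibility of the restriction maps $\uOmega^p_X\otimes^{\tL}\scrO_{X_t}\to\uOmega^p_{X_t}$ with the wedge maps of Lemma \ref{lem_wedge_uOmega}. Only once this is established does the five lemma identify the cones.

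A secondary difficulty is justifying the strict-$m$-Du Bois hypothesis on $X_t$ near $X_s$. If generization is problematic, the alternative is to avoid it: slice one coordinate at a time directly, i.e. use the induction hypothesis on $X_t\to S_t$ and $X\to T$, and compare via the triangle only at points of $X_s$. This uses that the relevant base change maps are quasi-isomorphisms in a neighborhood of $X_s$, which is what the statement requires after localizing.
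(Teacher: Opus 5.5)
Your proposal follows the paper's proof essentially step for step: induction on $\dim S$ via $g\colon S\to T=\bbA^{k-1}$, restricting the triangle of Proposition~\ref{prop_rel_dim1_triangle} to $X_t$, using the induction hypothesis for $g\circ f$ on the middle term and induction on $p$ on the left term, applying the five lemma, and finishing with curve base change \cite[Theorem 1.1]{Nin26} for $X_t\to S_t$ (the paper also obtains the needed comparison of triangles simply ``by functoriality''). Two small corrections: the fact you need is the total-space conclusion of \cite[Theorem 1.3]{Nin26}, namely that $X_t$, which is reduced near $X_s$, is strict-$m$-Du Bois near the Cartier divisor $X_s$, not openness or generization among fibers, and your fallback does not avoid it because the induction hypothesis for $g\circ f$ at $t$ requires exactly this; and $\scrO_{X_t}\otimes^{\tL}_X\scrO_{X_s}$ is not $\scrO_{X_s}$ (it has Koszul Tor terms), although the transitivity of derived restriction along $X_s\subset X_t\subset X$ that you actually use holds in general.
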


\begin{proof}
The case $\dim S=0$ is Lemma \ref{lem_bc_dim0} and the case $\dim S=1$ follows from Proposition \ref{prop_curve_compare} and \cite[Theorem 1.1(2)]{Nin26}. We will prove this by induction on the dimension of the base, so suppose $\dim S=k\ge 2$ and the theorem has been proved for all bases of $\dim \le k-1$.

The statement is local on the base, so after possibly shrinking $S$, choose \'etale coordinates $t_1,\dots, t_k$ and project from $t_k$ to obtain $g: S\to T=\bbA^{k-1}_{t_1,\dots, t_{k-1}}$. We see that $g$ is smooth of relative dimension $1$ and $g\circ f$ is flat.

Fix $s\in S$ in the theorem and let $t=g(s)$, so that we have a family $X_t \to S_t$ over a smooth curve $S_t$. Over $s\in S_t$, we have an inclusion of an effective Cartier divisor $X_s\to X_t$ by flatness of $f$. By assumption, $X_s$ is strict-$m$-Du Bois. In particular note that $X_t$ is reduced in a neighborhood of $X_s$. Therefore, after further shrinking of $X$ to a neighborhood containing $X_s$, \cite[Theorem 1.3]{Nin26} implies that $X_t$ is strict-$m$-Du Bois. We have thus arrived at the scenario of the diagram
\[\begin{tikzcd}[row sep=scriptsize]
X_s \ar[r, hookrightarrow] \ar[d] &
X_t \ar[r, hookrightarrow] \ar[d, "f_t"] &
X \ar[d, "f"] \\
\{s\} \ar[r, hookrightarrow] &
S_t \ar[r, hookrightarrow] \ar[d] &
S \ar[d, "g"] \\
&
\{t\} \ar[r, hookrightarrow] &
T
\end{tikzcd}\]
where each square is Cartesian. Now, Proposition \ref{prop_rel_dim1_triangle} pulled back to $X_t$ gives a triangle
\[
\uOmega^{p-1,+}_{X/S}\otimes^{\tL}_X\scrO_{X_t} \otimes f_t^*\Omega^1_{S_t} \to 
\uOmega^{p,+}_{X/T} \otimes^{\tL}_X\scrO_{X_t} \to 
\uOmega^{p,+}_{X/S} \otimes^{\tL}_X\scrO_{X_t} \xrightarrow{+1},
\]
where we used that $f^*\Omega^1_{S/T}\otimes \scrO_{X_t}\isom f_t^* \Omega^1_{S_t/\kappa(t)}$. By functoriality, we have a diagram
\[\begin{tikzcd}
\uOmega^{p-1,+}_{X/S}\otimes^{\tL}_X\scrO_{X_t} \otimes f_t^*\Omega^1_{S_t} \ar[r] \ar[d, "\qis"] &
\uOmega^{p,+}_{X/T} \otimes^{\tL}_X\scrO_{X_t} \ar[r] \ar[d, "\qis"] &
\uOmega^{p,+}_{X/S} \otimes^{\tL}_X\scrO_{X_t} \ar[r, "+1"] \ar[d] &
\, \\
\uOmega^{p-1,+}_{X_t/S_t} \otimes f_t^*\Omega^1_{S_t} \ar[r] &
\uOmega^{p}_{X_t} \ar[r] &
\uOmega^{p,+}_{X_t/S_t} \ar[r, "+1"] &
.\,
\end{tikzcd}\]
Applying the inductive hypothesis to $g\circ f$ gives base change of the middle vertical map
\[
\uOmega^{p,+}_{X/T}\otimes^{\tL}_X\scrO_{X_t} \qis \uOmega^p_{X_t}
\]
for $0\le p\le m$. On the other hand, we may perform induction on $p\ge 0$, starting from 
\[
\uOmega^{-1,+}_{X/S}\qis \uOmega^{-1,+}_{X_t/S_t}\qis 0,
\]
and see that the left vertical map is also a quasi-isomorphism. Therefore by the five-lemma we have
\[
\uOmega^{p,+}_{X/S} \otimes^{\tL}_X\scrO_{X_t} \qis \uOmega^{p,+}_{X_t/S_t}.
\]
Finally, apply curve base change \cite[Theorem 1.1(2)]{Nin26} to the family $X_t\to S_t$ to obtain
\[
\uOmega^{p,+}_{X/S} \otimes^{\tL}_X\scrO_{X_s} \qis \uOmega^{p,+}_{X_t/S_t} \otimes^{\tL}_{X_t}\scrO_{X_s} \qis \uOmega^p_{X_s}
\]
for $0\le p\le m$, as desired.
\end{proof}

\medskip
\bibliographystyle{alpha} 
\bibliography{ref}

\end{document}